\DeclareMathAlphabet{\mathpzc}{OT1}{pzc}{m}{it}
\DeclareFontFamily{OMX}{MnSymbolE}{}
\DeclareSymbolFont{MnLargeSymbols}{OMX}{MnSymbolE}{m}{n}
\DeclareFontShape{OMX}{MnSymbolE}{m}{n}{
    <-6>  MnSymbolE5
   <6-7>  MnSymbolE6
   <7-8>  MnSymbolE7
   <8-9>  MnSymbolE8
   <9-10> MnSymbolE9
  <10-12> MnSymbolE10
  <12->   MnSymbolE12
}{}
\DeclareFontShape{OMX}{MnSymbolE}{b}{n}{
    <-6>  MnSymbolE-Bold5
   <6-7>  MnSymbolE-Bold6
   <7-8>  MnSymbolE-Bold7
   <8-9>  MnSymbolE-Bold8
   <9-10> MnSymbolE-Bold9
  <10-12> MnSymbolE-Bold10
  <12->   MnSymbolE-Bold12
}{}
\let\llangle\@undefined
\let\rrangle\@undefined
\DeclareMathDelimiter{\llangle}{\mathopen}%
                     {MnLargeSymbols}{'164}{MnLargeSymbols}{'164}
\DeclareMathDelimiter{\rrangle}{\mathclose}%
                     {MnLargeSymbols}{'171}{MnLargeSymbols}{'171}
\def\rr{{\mathbb R}}
\def\rd{{{\rr}^d}}
\def\zz{{\mathbb Z}}
\def\nn{{\mathbb N}}
\def\cm{{\mathcal M}}
\def\fz{\infty}
\def\supp{{\mathop\mathrm{\,supp\,}}}
\def\lz{\lambda}
\def\wz{\widetilde}
\def\gfz{\genfrac{}{}{0pt}{}}
\def\ls{\lesssim}
\def\lo{{L^1(\rd)}}
\def\lin{{L^\fz(\rd)}}
\newtheorem{thm}{Theorem}[section]
\newtheorem{lem}[thm]{Lemma}
\newtheorem{prop}[thm]{Proposition}
\newtheorem{rem}[thm]{Remark}
\newtheorem{defn}[thm]{Definition}
\DeclareMathOperator*{\esssup}{ess\ sup}
\numberwithin{equation}{section}
\begin{document}

\arraycolsep=1pt

\title[Variation with Matrix Weight]{Variation of Calder\'on--Zygmund Operators with\\ Matrix Weight}
\author{Xuan Thinh Duong, Ji Li and Dongyong Yang}

\date{}
\maketitle

\begin{center}
\begin{minipage}{13.5cm}\small

{\noindent  {\bf Abstract:}\ Let $p\in(1,\infty)$, $\rho\in (2, \infty)$ and $W$ be a matrix $A_p$ weight.  In this article, we introduce a version of variation $\mathcal{V}_{\rho}({\mathcal T_n}_{\,,\,\ast})$ for matrix Calder\'on--Zygmund operators
 with modulus of continuity satisfying the Dini condition. We then obtain
the $L^p(W)$-boundedness of $\mathcal{V}_{\rho}({\mathcal T_n}_{\,,\,\ast})$ with norm
\begin{align*}
\|\mathcal{V}_{\rho}({\mathcal T_n}_{\,,\,\ast})\|_{L^p(W)\to L^p(W)}\leq  C[W]_{A_p}^{1+{1\over p-1} -{1\over p}}
\end{align*}
by first proving a sparse domination of the variation of the scalar Calder\'on--Zygmund operator, and then providing a convex body sparse domination of the variation of the matrix Calder\'on--Zygmund operator. The key step here is a weak type estimate of a local grand maximal truncated operator with respect to the scalar Calder\'on--Zygmund operator.
}
\end{minipage}
\end{center}

%
\bigskip

{\small {\it Keywords}: Calder\'on--Zygmund operator, variation, matrix weight, sparse operator.}

\medskip

{\small{Mathematics Subject Classification 2010:} {42B20, 42B99}}


\section{Introduction and Statement of Main Results\label{s1}}

It is well known that scalar Muckenhoupt $A_p$ weights have a long history since 1970s and are central to the study of weighted norm inequalities in modern harmonic analysis.
Matrix $A_p$ weights are more recent, introduced  by Nazarov, Treil, Volberg \cite{NazarovTreil96AlgeAnal}, \cite{Volberg97JAMS},
\cite{TreilVolberg97JFA}, \cite{NazarovTV99JAMS},  and arose from problems in stationary processes and operator theory. And later harmonic analysis with matrix weights have been intensively studied by many authors, see for example
 \cite{Roudenko02PHDT}, \cite{Goldberg03PacificJM}, \cite{BickelPW16JFA}, \cite{BickelWick16JMAA},
\cite{Cruz-UribeMR16JGA}, \cite{BickelLM17JMAA},
\cite{PottStoica17BullSM},
\cite{IsralowitzKP17JLMS}, \cite{NazarovPTV17AdvMath} and so on. Among these, we would like to mention the recent notable result of Nazarov, Petermichl, Treil and Volberg \cite{NazarovPTV17AdvMath}, where they introduced the so-called convex body valued sparse operator, which generalizes the notion of sparse operators in the scalar setting (\cite{LernerOR17AdvMath}) to the case of space of vector valued functions. And then they proved the domination of Calder\'on--Zygmund operators by such sparse operators, and hence by estimating sparse operators they obtained the weighted estimates with matrix weights, which in turn yields the weighted estimates for Calder\'on--Zygmund operators with matrix weights.

In \cite{Lepingle76ZWVG}, L\'epingle first proved a variation inequality for martingales which improves the classical Doob maximal inequality (see also \cite{PisierXu88PTRF}). Based on L\'epingle's result, Bourgain \cite{Bourgain89IHSPM} further obtained corresponding variational estimates for the Birkhoff ergodic averages along subsequences of natural numbers and
then directly deduced pointwise convergence results without previous knowledge that pointwise convergence holds for a dense subclass of functions, which are not available in some ergodic models. Since then, the variational inequalities have been the subject of many recent articles
in probability, ergodic theory and harmonic analysis.
In particular, Campbell et al. \cite{CampbellJRW00DMJ}
established the $L^p$-boundedness of variation for truncated
Hilbert transform and then
extended to higher dimensional case in \cite{CampbellJRW03TAMS}.
Then in \cite{GillespieTorrea04IsraelJM}, to  obtain 
 dimension-free estimate for the oscillation and variation of the Riesz transforms operating on
 $L^p (\mathbb R^d, |x|^\alpha)$, Gillespie and Torrea further established 
some $A_p$-weighted norm inequalities for the oscillation
and the variation of the Hilbert transform in $L^p$ for $p\in(1, \infty)$.
In \cite{MaTX15JFA}, Ma et al. established the weighted norm inequalities for the variation of 
Calder\'on-Zygmund operators on $\mathbb R$ for $A_p(\mathbb R)$ weights with $p\in [1, \infty).$
For more results on variational inequalities and their applications, see, for example, \cite{GillespieTorrea04IsraelJM,Jones06ComtempMath,BetancorFHR13PA,
Bui14NonlinearAnal,MasTolsa14JEMS,
MaTX15JFA,MasTolsa17TAMS,MirekST17InventMath,BourgainMSW18GFA} and references therein.

In this paper, we study the weighted estimates for the variation for Calder\'on--Zygmund operators whose kernels satisfy the 
 Dini condition (see Definition \ref{def CZO} below) with matrix weights.
To this end, we recall a few necessary notation and definitions in the setting of matrix weight.

Let $n$ and $d$ be natural numbers and $W: \mathbb{R}^d\to \mathcal{M}_n(\mathbb{R})$ be positive definite a.\,e. (where as usual
$\mathcal{M}_n(\mathbb{R})$ is the algebra of $n\times n$ matrices with complex scalar entries). We say $W$ is measurable if each component of $W$ is a measurable function. Recall that if $W$ is a self-adjoint and positive definite
matrix, then it has $n$ non-negative eigenvalues $\lz_i$, $i\in\{1,\cdots, n\}$.
Moreover, there exists a measurable orthogonal matrix function $U$
such that $U^t WU=D(\lz_1,\cdots,\lz_n)=: D(\lz_i)$ is diagonal (see \cite[Lemma 2.3.5]{RonShen95CJM}). Now for
any $s>0$, define $W^s:=UD(\lz^s_i)U^t$. For a fixed matrix function $W$,
we will always implicitly assume that all of its powers are defined using the same
orthogonal matrix $U$. For such matrix function, we then define $L^p(W)$ for $1<p<\infty$ to be the space of measurable functions $\vec{f}: \mathbb{R}^d\to \mathbb{R}^n$ with norm
$$\left\|\vec{f}\right\|_{L^p(W)}:=\bigg( \int_{\mathbb{R}^d} \Big| W^{1\over p}(x) \vec{f}(x) \Big|^p\,dx \bigg)^{1/p}<\fz.
$$

By a matrix weight we mean a matrix function $W$ such that
$\|W\|\in L^1_{\rm loc}(\rd, \rr)$, where for each $x\in\rd$,
$$\|W(x)\|:=\sup_{\vec e\in \mathbb R^n,\,|\vec e|=1}|W(x)\vec e|.$$
For $s>0$, we also define negative powers of $W$ through the orthogonal matrix $U$
by setting $W^{-s}:= UD(\lz_i^{-s})U^t$. Now suppose $1<p<\infty$. We say $W$ is a matrix $A_p$ weight (and we write $W\in \mathcal A_p$ in this case), if
\begin{align}\label{e-matrix ap defn}
 [W]_{\mathcal A_p}:=\sup_{Q\subset \mathbb{R}^d,\,Q\, {\rm is\ a\ cube }} {1\over |Q|} \int_Q \bigg( {1\over |Q|} \int_Q
\Big\| W^{1\over p}(x)  W^{-{1\over p}}(t)\Big\|^{p'} dt \bigg)^{p\over p'} dx<\infty,
\end{align}
where $p'$ is the conjugate index of $p$.

We now recall the Calder\'on--Zygmund operator as follows.
\begin{defn}\label{def CZO}
$T$ is a scalar Calder\'on--Zygmund operator with kernel $K(x,y)$ defined on $\mathbb{R}^d\times \mathbb{R}^d\backslash \{(x,y):\ x=y\}$ if
$T$ is bounded on $L^2(\mathbb{R}^d, \rr)$, and the kernel $K(x,y)$  satisfies the following size and smoothness condition:
\begin{itemize}
\item  for any $x, y$ with $x\not=y$,
\begin{align}\label{e-cz kernel size cond}
|K(x,y)|\leq {\displaystyle C\over \displaystyle |x-y|^d};
\end{align}
\item  for any $x,\, x',\,y$ with $|x-x'|\leq |x-y|/2$,
\begin{align}\label{e-cz kernel smooth cond}
 |K(x,y)-K(x',y)|+  |K(y,x)-K(y,x')|\leq  C\omega\bigg({\displaystyle |x-x'| \over \displaystyle |x-y|}\bigg){\displaystyle1 \over \displaystyle |x-y|^{d}},
 \end{align}
 \end{itemize}
 where $\omega(t)$ is an increasing subadditive function on $[0,\infty)$ satisfying $\omega(0)=0$ and the Dini condition
\begin{align}\label{e-dini cond}
 \int_0^1 w(t) {dt\over t}<\infty.
\end{align}
\end{defn}

\begin{rem}\label{r-czo}
In \cite{MaTX15JFA}, the kernel $K(x, y)$ of a Calder\'on-Zygmund operator $T$ is assumed to 
be a function on $\mathbb R\times \mathbb R\setminus \{(x,x):\, x\in\mathbb R\}$ satisfying the following conditions: there exist positive constants $C$ and $\delta$ such that
\begin{itemize}
  \item [(i)] for any $x\not= y$, 
  \begin{align*}
  |K(x, y)|\le \frac C{|x-y|};
  \end{align*}
  \item [(ii)] for any $x,\, x',\,y$ with $|x-x'|\leq |x-y|/2$,
\begin{align}\label{e-cz kernel regularity}
 |K(x,y)-K(x',y)|+  |K(y,x)-K(y,x')|\leq  C\frac{|x-x'|}{|x-y|^{1+\delta}}.
 \end{align}
\end{itemize}
We then see that when $d=1$, if a kernel $K$  satisfies 
\eqref{e-cz kernel regularity}, then $K$ also satisfies \eqref{e-cz kernel smooth cond}.
\end{rem}

Let $T$ be a scalar Calder\'on--Zygmund operator as above, and for any $\epsilon\in (0, \fz)$,
$$T_\epsilon f(x):=
\int_{|x-y|>\epsilon} K(x, y)f(y)\,dy.$$
 Recall that variation operator $\mathcal{V}_{\rho}({\mathcal T}_{\ast}f)$ for $\{T_\epsilon\}$  and $\rho\in(2, \fz)$ is defined by
 \begin{equation}\label{def variation}
 \mathcal{V}_{\rho}({\mathcal T}_{\ast}f)(x):= \sup_{\epsilon_i\searrow0}
 \Big(\sum_{i=1}^{\infty}|T_{\epsilon_{i+1}}f(x)-T_{\epsilon_{i}}f(x)|^{\rho}\Big)^{1/\rho},
 \end{equation}
 where the supremum is taken over all sequences $ \{\epsilon_{i}\}$ decreasing to zero.

Then our first main result is the sparse domination of $\mathcal{V}_{\rho}({\mathcal T}_{\ast}f)$.

To be more precise, let $\mathcal F$ and $\overline{\mathcal F}$ be two collections of disjoint
dyadic cubes. We say $\overline {\mathcal F}$ covers $\mathcal F$ if for any
cube $Q\in \mathcal F$, one can find $R\in \overline{\mathcal F}$ such that $Q\subset R.$ For a given cube $Q_0\subset \rd$, let $\mathcal D(Q_0)$ denote the set of all dyadic cubes with respect to $Q_0$, that is, the  cubes obtained by repeated subdivision of $Q_0$ and each of its descendants in $2^d$ congruent subcubes. Next we establish the following pointwise sparse domination on the variation operator $\mathcal{V}_{\rho}({\mathcal T}_{\ast}f)(x)$ (see  \cite{Lerner16NewYork} for a similar version of sparse domination of $T$).

\begin{thm}\label{p-sparse domi}
Let $T$ be a Calder\'on--Zygmund operator as in Definition \ref{def CZO} and $\varepsilon\in(0, 1)$. Suppose that  $\mathcal{V}_{\rho}({\mathcal T}_{\ast})$ is bounded on
$L^q(\mathbb R^d, \mathbb R)$ for some $q\in (1, \fz)$.   For every $f\in L^\infty(\rd)$ with  $\supp(f)\subset Q_0$,
there exists a family $\mathcal F\subset \mathcal D(Q_0)$
of disjoint dyadic subcubes of $Q_0$ such that
\begin{itemize}
  \item [{\rm (i)}]$\sum_{Q\in \mathcal F}|Q|\le \varepsilon |Q_0|$;
  \item [{\rm (ii)}] for a.\,e. $x\in Q_0$,
\begin{align*}
\left|\mathcal{V}_{\rho}({\mathcal T}_{\ast}f)(x)-\sum_{Q\in \mathcal F}\mathcal{V}_{\rho}({\mathcal T}_{\ast}(f\chi_{3Q}))(x)\chi_Q(x)\right|\leq C
|f|_{3Q_0};
\end{align*}
\item [{\rm (iii)}] for any collection $\overline{\mathcal F}$ of disjoint dyadic subcubes of $Q_0$ that covers $\mathcal F$,
    \begin{align*}
\left|\mathcal{V}_{\rho}({\mathcal T}_{\ast}f)(x)-\sum_{Q\in \overline{\mathcal F}}\mathcal{V}_{\rho}({\mathcal T}_{\ast}(f\chi_{3Q}))(x)\chi_Q(x)\right|\leq C
|f|_{3Q_0}.
\end{align*}
\end{itemize}
\end{thm}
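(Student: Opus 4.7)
The plan is to adapt Lerner's pointwise sparse-domination scheme, originally developed for scalar Calder\'on--Zygmund operators, to the sublinear variation functional $\mathcal{V}_\rho(\mathcal{T}_*)$. The central object will be a \emph{local grand maximal truncated variation operator}
$$
\mathcal{M}^{\#}_{\mathcal{V}_\rho,\, Q_0} f(x)
:= \sup_{Q\ni x,\ Q\in \mathcal{D}(Q_0)}\ \esssup_{\xi\in Q}\ \mathcal{V}_\rho\bigl(\mathcal{T}_*(f\chi_{Q_0\setminus 3Q})\bigr)(\xi),
$$
and the whole proof hinges on a weak-type bound of the form
$$
\bigl|\bigl\{x\in Q_0:\ \mathcal{M}^{\#}_{\mathcal{V}_\rho,\, Q_0} f(x) > \lambda |f|_{3Q_0}\bigr\}\bigr| \le \frac{C}{\lambda}\,|Q_0|.
$$
This weak-type estimate is the key technical ingredient flagged in the introduction, and I expect it to be the main obstacle of the whole argument. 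To prove it I would, for each $Q\in \mathcal{D}(Q_0)$ and each $\xi\in Q$, split the variation $\mathcal{V}_\rho(\mathcal{T}_*(f\chi_{Q_0\setminus 3Q}))(\xi)$ according to whether the truncation parameters $\epsilon_i$ are of order $\gtrsim\ell(Q)$ or much smaller. The large-scale piece should be controlled by the assumed $L^q$-boundedness of $\mathcal{V}_\rho(\mathcal{T}_*)$ combined with the standard tail-absorption by a Hardy--Littlewood maximal function. The small-scale piece should reduce, via the kernel smoothness \eqref{e-cz kernel smooth cond} and the Dini condition \eqref{e-dini cond}, to an $\ell^\rho$ telescoping series in kernel differences $K(x,y)-K(x',y)$ whose sum converges and is dominated by $\int_0^1\omega(t)\,dt/t$ times a Hardy--Littlewood maximal function of $f$.

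Once the weak-type bound is in hand, the sparse family is produced by a standard Calder\'on--Zygmund stopping-time construction. I would define
$$
E := \bigl\{x\in Q_0:\ M(f\chi_{3Q_0})(x) > A|f|_{3Q_0}\bigr\}\ \cup\ \bigl\{x\in Q_0:\ \mathcal{M}^{\#}_{\mathcal{V}_\rho,\, Q_0}f(x) > A|f|_{3Q_0}\bigr\},
$$
with $A$ large enough (depending on $\varepsilon$ and $d$) that the two weak-type bounds give $|E|\le \varepsilon |Q_0|/(2\cdot 2^d)$. A dyadic stopping-time decomposition of $E$ inside $Q_0$ then produces a family $\mathcal{F}\subset\mathcal{D}(Q_0)$ of pairwise disjoint maximal dyadic subcubes $Q$ with $|Q\cap E|>|Q|/(2\cdot 2^d)$, whose union covers $E$ modulo a null set. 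The maximality moreover forces $|Q\cap E|\le |Q|/2$, so that $|Q\setminus E|\ge |Q|/2$. Summing yields $\sum_{Q\in\mathcal{F}}|Q|\le 2\cdot 2^d\,|E|\le \varepsilon|Q_0|$, which is item (i).

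Items (ii) and (iii) then rest on the pointwise sublinearity
$$
\bigl|\mathcal{V}_\rho(\mathcal{T}_*f)(x)-\mathcal{V}_\rho(\mathcal{T}_*(f\chi_{3Q}))(x)\bigr|\le \mathcal{V}_\rho\bigl(\mathcal{T}_*(f\chi_{Q_0\setminus 3Q})\bigr)(x),
$$
a direct consequence of the linearity of each $T_\epsilon$ and the $\ell^\rho$-triangle inequality applied to the difference sequence defining $\mathcal{V}_\rho$. For $x\in Q\in\mathcal{F}$, testing the supremum in the definition of $\mathcal{M}^{\#}_{\mathcal{V}_\rho,\, Q_0}$ at the cube $Q$ itself shows $\esssup_{\xi\in Q}\mathcal{V}_\rho(\mathcal{T}_*(f\chi_{Q_0\setminus 3Q}))(\xi)\le \essinf_{y\in Q}\mathcal{M}^{\#}_{\mathcal{V}_\rho,\, Q_0}f(y)$; since $|Q\setminus E|\ge |Q|/2>0$ there is $y\in Q\setminus E$ with $\mathcal{M}^{\#}_{\mathcal{V}_\rho,\, Q_0}f(y)\le A|f|_{3Q_0}$, yielding (ii) on $Q$ for a.e.\ $x$. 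For $x\in Q_0\setminus\bigcup\mathcal{F}$, one has $x\notin E$ a.e., and a Lebesgue-differentiation argument with dyadic cubes shrinking to $x$ transfers the bound from $\mathcal{M}^{\#}_{\mathcal{V}_\rho,\, Q_0}f$ to $\mathcal{V}_\rho(\mathcal{T}_*f)$ itself. For (iii), given $R\in\overline{\mathcal{F}}$ pick $Q\in\mathcal{F}$ with $Q\subset R$ and $y\in Q\setminus E$; since $R\ni y$, testing the maximal operator at $y$ with the larger cube $R$ controls $\esssup_{\xi\in R}\mathcal{V}_\rho(\mathcal{T}_*(f\chi_{Q_0\setminus 3R}))(\xi)$ by $A|f|_{3Q_0}$, which via the same sublinearity gives the bound on $R$; outside $\bigcup\overline{\mathcal{F}}$ the point already lies in $Q_0\setminus\bigcup\mathcal{F}$, where (ii) applies directly.
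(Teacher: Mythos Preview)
Your overall scheme matches the paper's: define the local grand maximal truncated variation operator, establish its weak $(1,1)$ bound, run a stopping-time on the exceptional set to produce $\mathcal{F}$, and use sublinearity for (ii)--(iii). The stopping-time portion and the verification of (ii)--(iii) are essentially identical to what the paper does.

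The gap is in your sketch of the weak $(1,1)$ bound for $\mathcal{M}^{\#}_{\mathcal{V}_\rho,Q_0}$. First, the truncation-scale splitting is misaligned: since $f\chi_{Q_0\setminus 3Q}$ is supported at distance $\gtrsim l(Q)$ from any $\xi\in Q$, the truncations $T_{\epsilon_i}$ with $\epsilon_i\ll l(Q)$ all coincide there, so the small-scale contribution to the variation vanishes and needs no kernel smoothness. What the paper actually does (Proposition~\ref{p-grand maxi truncat bdd}) is compare $\mathcal{V}_\rho(\mathcal{T}_*(f\chi_{3Q_0\setminus 3Q}))(\xi)$ to $\mathcal{V}_\rho(\mathcal{T}_*f)(x)$ by inserting intermediate balls around $x$; the difference is bounded pointwise by $[\mathcal{M}(|f|^r)(x)]^{1/r}+\mathcal{V}_\rho(\mathcal{T}_*f)(x)$ for some $r\in(1,\min\{q,\rho\})$, where the exponent $r>1$ is forced by a H\"older step needed to control the mismatch between the annuli $\{\varepsilon_{j+1}<|\xi-y|\le\varepsilon_j\}$ and $\{\varepsilon_{j+1}<|x-y|\le\varepsilon_j\}$. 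This pointwise bound yields $L^q$-boundedness of the grand maximal but \emph{not} weak $(1,1)$, since $[\mathcal{M}(|f|^r)]^{1/r}$ fails the weak endpoint for $r>1$. The paper therefore runs a \emph{second}, separate Calder\'on--Zygmund decomposition on $f$ (in the spirit of the proof of Proposition~\ref{p-vari lp bdd imp weal type}) to get weak $(1,1)$ directly; this step carries most of the technical weight and is not flagged in your outline. Finally, for $x\in Q_0\setminus\bigcup\mathcal{F}$ the paper invokes the pointwise inequality $\mathcal{V}_\rho(\mathcal{T}_*(f\chi_{3Q_0}))(x)\le C|f(x)|+\mathcal{M}_{\mathcal{V}_\rho(\mathcal{T}_*),Q_0}f(x)$ (Lemma~\ref{l-vari pointwise domin}), which is the rigorous form of your Lebesgue-differentiation heuristic and itself relies on the weak $(1,1)$ of $\mathcal{V}_\rho(\mathcal{T}_*)$ established in Proposition~\ref{p-vari lp bdd imp weal type}.
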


We point out that the key step for this sparse domination is the weak type $(1,1)$ estimate of a local grand maximal truncated operator  $\mathcal M_{\mathcal{V}_{\rho}({\mathcal T}_{\ast}),Q_0}$
defined as below (see also \cite{Lerner16NewYork}).
Given a cube $Q_0\subset \rd$,  $\mathcal M_{\mathcal{V}_{\rho}({\mathcal T}_{\ast}),Q_0}$ is defined as follows:
\begin{align}\label{Gmaximal}
\mathcal M_{\mathcal{V}_{\rho}({\mathcal T}_{\ast}),Q_0}f(x) := \left\{
                                                                  \begin{array}{ll}
                                                                    \sup\limits_{Q\ni x,\, Q\subset Q_0} \esssup\limits_{\xi\in Q} \mathcal{V}_{\rho}\big({\mathcal T}_{\ast}(f\chi_{3Q_0\setminus3Q})\big)(\xi), & {\ \ \ x\in Q_0;} \\[15pt]
                                                                    0, & \hbox{\ \ \ otherwise.}
                                                                  \end{array}
                                                                \right.
\end{align}
For the weak type  boundedness of
$\mathcal M_{\mathcal{V}_{\rho}({\mathcal T}_{\ast}),Q_0}(f)$, we refer to Proposition \ref{p-grand maxi truncat bdd}.

Next we denote by $Id_n$ the $n$-th order identity matrix.
Now we define
\begin{align}\label{Tn}
 T_n:= T\otimes Id_n.
\end{align}
To be more specific, suppose $\vec{f}:\  \mathbb{R}^d\to \mathbb{R}^n$  is a vector-valued function, say $\vec{f}:=(f_1,\ldots,f_n)$. Then the action of $T_n$ on the vector is componentwise, i.\,e.,
$$ T_n(\vec{f}):= ( Tf_1,\ldots, Tf_n ). $$

We now introduce the variation for the matrix Calder\'on--Zygmund operator $T_n$ as follows.
\begin{defn}
Let $T_n$ be a matrix Calder\'on--Zygmund operator as defined in \eqref{Tn}. We define
\begin{align*}
\mathcal{V}_{\rho}({\mathcal T_n}_{\,,\,\ast}\vec f\ )(x) :=
\begin{bmatrix}
      \mathcal{V}_{\rho}({\mathcal T}_{\ast}f_1)(x)     \\
       \mathcal{V}_{\rho}({\mathcal T}_{\ast}f_2)(x)  \\
      \vdots  \\
      \vdots\\
    \mathcal{V}_{\rho}({\mathcal T}_{\ast}f_n)(x)
\end{bmatrix},
\end{align*}
where $\vec{f}:=(f_1,\ldots,f_n)$.
\end{defn}
Given $q\in(1,\infty)$, we say that $\mathcal{V}_{\rho}({\mathcal T_n}_{\,,\,\ast})$ is bounded on $L^q(\mathbb R^d,\mathbb R^n)$, if there exists a positive constant $C$ such that for any $\vec f\in L^q(\rd, \mathbb R^n)$,
$$\int_\rd \left|\mathcal{V}_{\rho}({\mathcal T_n}_{\,,\,\ast}\ \vec f)(x)\right|^q\,dx\le C\int_\rd\left|\vec f(x)\right|^q\,dx.$$
Note that this is equivalent to the fact that $\mathcal{V}_{\rho}({\mathcal T}_{\ast})$ is bounded on $L^q(\rd, \mathbb R)$. Now we state our main result in this paper.

\begin{thm}\label{t-lp bdd vari matrix weigh}
Suppose $p\in(1, \infty)$, $\rho\in (2, \fz)$ and $W$ is a matrix $A_p$ weight.  Let $T$ be a scalar Calder\'on--Zygmund operator
and $T_n$ is defined as in \eqref{Tn}. Suppose $\mathcal{V}_{\rho}({\mathcal T_n}_{\,,\,\ast})$ is bounded on
$L^q(\mathbb R^d,\mathbb R^n)$ for some $q\in(1,\infty)$. Then $\mathcal{V}_{\rho}({\mathcal T_n}_{\,,\,\ast})$ is bounded on $L^p(W)$ with
\begin{align*}
\left\|\mathcal{V}_{\rho}\left({\mathcal T_n}_{\,,\,\ast} \right)\right\|_{L^p(W)\to L^p(W)}\leq  C[W]_{A_p}^{1+{1\over p-1} -{1\over p}}.
\end{align*}
\end{thm}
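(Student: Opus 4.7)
The plan is to follow the by-now-standard route from sparse domination to matrix-weighted bounds, using Theorem \ref{p-sparse domi} as the scalar input. First, I iterate the stopping-time decomposition provided by Theorem \ref{p-sparse domi} to upgrade the local residual control in (ii)--(iii) into a global sparse pointwise bound of the form
\begin{align*}
\mathcal{V}_{\rho}({\mathcal T}_{\ast}f)(x) \le C \sum_{Q\in\mathcal S} |f|_{3Q}\, \chi_Q(x) \quad \text{for a.e.\ } x\in\rd,
\end{align*}
for some sparse family $\mathcal S$ depending on $f$. This step is routine: one exhausts $\rd$ by an increasing tower of dyadic cubes $Q_0^{(k)}$, splits $f = \sum_k f\chi_{Q_0^{(k)}\setminus Q_0^{(k-1)}}$, and applies Theorem \ref{p-sparse domi} recursively inside each $Q_0^{(k)}$, merging the resulting subcubes into a single sparse collection using the smallness in (i).

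Second, I promote this scalar bound to a convex body sparse domination for $\mathcal{V}_{\rho}({\mathcal T_n}_{\,,\,\ast}\vec f\,)$. Since $T_n = T\otimes Id_n$ and each $T_\epsilon$ is linear in the input, the componentwise structure gives, for every unit vector $\vec e\in\mathbb R^n$,
\begin{align*}
\vec e \cdot (T_{\epsilon_{i+1}}-T_{\epsilon_i})\vec f\,(x) = (T_{\epsilon_{i+1}}-T_{\epsilon_i})(\vec f\cdot \vec e)(x),
\end{align*}
so the scalar sparse bound of the first step applies to each one-dimensional projection $\vec f\cdot\vec e$ of $\vec f$. To collapse the directionally dependent sparse decompositions into a single one, I would rerun the local grand maximal truncation construction introduced before Proposition \ref{p-grand maxi truncat bdd} directly on the vector-valued input $\vec f$, invoking the weak type $(1,1)$ bound of $\mathcal M_{\mathcal V_\rho({\mathcal T}_\ast),Q_0}$ componentwise together with a Calder\'on--Zygmund type covering. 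Following the convex body machinery of Nazarov--Petermichl--Treil--Volberg \cite{NazarovPTV17AdvMath}, this produces a sparse family $\mathcal S$ independent of $\vec e$ and convex body averages $\llangle \vec f\,\rrangle_{3Q}\subset\mathbb R^n$ of $\vec f$ over $3Q$ such that $\mathcal{V}_{\rho}({\mathcal T_n}_{\,,\,\ast}\vec f\,)(x)$ is dominated, in the convex body sense, by $C\sum_{Q\in\mathcal S} \llangle \vec f\,\rrangle_{3Q} \chi_Q(x)$.

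Finally, the stated $L^p(W)$ bound with exponent $1+\frac{1}{p-1}-\frac{1}{p}$ follows by applying the matrix $\mathcal A_p$-weighted $L^p$ estimate for convex body sparse operators from \cite{NazarovPTV17AdvMath} to this convex body sparse form. The main obstacle is the second step: the nonlinearity of $\mathcal V_\rho$ rules out any direct duality argument that would reduce the matrix bound to scalar testing, so the scalar sparse domination provided by Theorem \ref{p-sparse domi} has to be upgraded through a stopping-time construction that is \emph{uniform} across all one-dimensional projections $\vec f\cdot\vec e$. This is precisely the role of the weak type $(1,1)$ estimate for the local grand maximal truncation $\mathcal M_{\mathcal V_\rho({\mathcal T}_\ast),Q_0}$, identified in the introduction and to be proved in Proposition \ref{p-grand maxi truncat bdd}, as the key ingredient enabling the convex body extension.
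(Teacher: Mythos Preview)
Your proposal is correct and follows essentially the same three-stage route as the paper: the scalar local sparse lemma (Theorem \ref{p-sparse domi}), then convex body sparse domination via the machinery of \cite{NazarovPTV17AdvMath}, then the matrix $\mathcal A_p$ bound for the resulting sparse form.

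Two small points where the paper's execution differs from your description. First, in the passage from scalar to convex body, the paper (Proposition 3.2, following \cite{NazarovPTV17AdvMath}) does \emph{not} ``rerun the local grand maximal truncation construction on the vector-valued input''; instead it fixes an orthonormal basis $\vec e_1,\ldots,\vec e_n$ adapted to the John ellipsoid of $\llangle\vec f\,\rrangle_{3Q_0}$, applies the scalar Theorem \ref{p-sparse domi} with $\varepsilon=\delta/n$ to each coordinate $f_k=\langle\vec f,\vec e_k\rangle$, and then merges the $n$ resulting families $\mathcal G_k$ into a single family of maximal cubes using the covering stability in part (iii) --- this is precisely why (iii) is stated and is the mechanism that makes the family independent of direction. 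Second, for the final weighted estimate the paper does not invoke \cite{NazarovPTV17AdvMath} directly but instead reproduces the reducing-operator duality argument of Cruz-Uribe--Isralowitz--Moen \cite{Cruz-UribeIM18IEOT}, which is what yields the specific exponent $1+\tfrac{1}{p-1}-\tfrac{1}{p}$ for all $p\in(1,\infty)$.
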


In Section \ref{s2} we prove our first main result Theorem \ref{p-sparse domi}, i.e.,  the pointwise sparse domination on the variation operator $\mathcal{V}_{\rho}({\mathcal T}_{\ast}f)(x)$. The key step for this sparse domination is weak type $(1,1)$ estimate of
$\mathcal M_{\mathcal{V}_{\rho}({\mathcal T}_{\ast}),Q_0}f(x)$ defined in \eqref{Gmaximal}. Compared
to the local grand maximal truncated operator of $T$ considered in \cite{Lerner16NewYork}, our
proof here is more complicated.
In Section \ref{s3}, we first
extend the pointwise domination of variation in Section \ref{s2} to the vector-valued setting. Then we obtain a version of domination of $\mathcal{V}_{\rho}({\mathcal T_n}_{\,,\,\ast}\vec f)$ by convex body valued sparse operators introduced in \cite{NazarovPTV17AdvMath}, and by following some
idea from \cite{Cruz-UribeIM18IEOT}, present the proof of Theorem \ref{t-lp bdd vari matrix weigh}.

Throughout this paper, we write $A\ls B$ if $A\le C B$,
 where $C$ is a positive constant whose value may change from line to line, and the symbol $A\approx B$ as usual means that $A\ls B$ and $B\ls A$.
  For any cube $Q\subset \rd$ and any scalar function $f\in L^1_{\rm loc}(\rd,\rr)$,  $f_Q$ means the mean value of $f$ over $Q$. For any $t>0$,
$tQ$ is the cube concentric with $Q$ and has side-length $tl(Q)$.
For a given measurable subset $E$ of $\rd$, $\chi_E$ means the characteristic function of $E$. Finally, for $p\in[1, \fz]$, we write the notation $L^p(\rd, \rr)$ simply by $L^p(\rd)$.

\section{Weak type estiamte and sparse domination of variation operator: \\ proof of Theorem \ref{p-sparse domi}}\label{s2}
In this section, we first establish the weak type (1, 1) estimate of variation
$\mathcal{V}_{\rho}({\mathcal T}_{\ast})$ and the local grand maximal truncated operator  $\mathcal M_{\mathcal{V}_{\rho}({\mathcal T}_{\ast}),Q_0}$ , and then we obtain a version of the pointwise domination on $\mathcal{V}_{\rho}({\mathcal T}_{\ast}f)$ for suitable functions $f$.

\begin{prop}\label{p-vari lp bdd imp weal type}
Assume that $T$ is as in Definition \ref{def CZO} and $\mathcal{V}_{\rho}({\mathcal T}_{\ast})$ is bounded on
$L^q(\mathbb R^d, \mathbb R)$ for some $q\in (1, \fz)$. Then $\mathcal{V}_{\rho}({\mathcal T}_{\ast})$
is of weak type (1,1).
\end{prop}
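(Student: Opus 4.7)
The plan is to follow the classical Calder\'on--Zygmund route to weak type (1,1): a Calder\'on--Zygmund decomposition $f=g+b$ at height $\lambda$, the $L^q$-hypothesis for the good piece, and the Dini regularity of $K$ together with the cancellation $\int b_j=0$ for the bad piece. Fix $\lambda>0$ and decompose $f=g+b$ with $b=\sum_j b_j$, each $b_j$ supported in a pairwise disjoint cube $Q_j$, $\int b_j=0$, $\|b_j\|_{L^1}\lesssim \lambda|Q_j|$, $\|g\|_\infty\lesssim \lambda$, and $\sum_j|Q_j|\lesssim \lambda^{-1}\|f\|_{L^1}$. Put $\Omega:=\bigcup_j 2Q_j$, so that $|\Omega|\lesssim \lambda^{-1}\|f\|_{L^1}$ is absorbed into the weak-type bound immediately, and by sublinearity it remains to control the level sets of $\mathcal V_\rho(\mathcal T_* g)$ and $\mathcal V_\rho(\mathcal T_* b)$ on $\Omega^c$.

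The good part is straightforward: the trivial bound $\|g\|_{L^q}^q\le \|g\|_\infty^{q-1}\|g\|_{L^1}\lesssim \lambda^{q-1}\|f\|_{L^1}$, Chebyshev's inequality, and the $L^q$-hypothesis on $\mathcal V_\rho(\mathcal T_*)$ give $|\{\mathcal V_\rho(\mathcal T_* g)>\lambda/2\}|\lesssim \lambda^{-1}\|f\|_{L^1}$. For the bad part, Chebyshev and the sub-additivity of $\mathcal V_\rho$ reduce matters to the per-cube estimate
\[
\int_{(2Q_j)^c}\mathcal V_\rho(\mathcal T_* b_j)(x)\,dx\lesssim \|b_j\|_{L^1}\qquad\text{for every }j.
\]
Fix $j$, denote by $y_j$ the center and $r_j$ the side-length of $Q_j$, and for $x\notin 2Q_j$ set $R:=|x-y_j|$. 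For such an $x$, the function $\epsilon\mapsto T_\epsilon b_j(x)$ is constantly equal to $Tb_j(x)$ on $\epsilon<\operatorname{dist}(x,Q_j)$ and vanishes on $\epsilon>\sup_{y\in Q_j}|x-y|$; hence only an $O(r_j)$-wide transition window around $R$ contributes nontrivially to the variation, separated from the stable regimes by at most one ``jump'' of size $|Tb_j(x)|$. The jump piece is handled by the classical Dini estimate: using $\int b_j=0$ and \eqref{e-cz kernel smooth cond},
\[
|Tb_j(x)|\le \int_{Q_j}|K(x,y)-K(x,y_j)||b_j(y)|\,dy\lesssim \omega\bigl(r_j/R\bigr)\frac{\|b_j\|_{L^1}}{R^d},
\]
which is integrable on $(2Q_j)^c$ precisely by \eqref{e-dini cond}.

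The main obstacle I anticipate is controlling the contribution from inside the transition window. The naive total-variation bound $\int_{Q_j}|K(x,y)||b_j(y)|\,dy\lesssim \|b_j\|_{L^1}/R^d$ is not $L^1$-integrable over $(2Q_j)^c$, so one must exploit $\rho>2$ to extract an extra Dini-type decay. My plan is to split the transition range dyadically around $R$ and adapt the short/long variation decomposition of Campbell--Jones--Reinhold--Wierdl: the short variation on each dyadic sub-annulus is estimated in $\ell^2$ via a direct size bound on $K$, while the long variation along the dyadic sequence is controlled by a L\'epingle--Jones--Seeger--Wright type square function whose $L^2$-boundedness is inherited from the $L^q$-hypothesis on $\mathcal V_\rho(\mathcal T_*)$ through standard Calder\'on--Zygmund interpolation. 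Exploiting cancellation against $K(x,y_j)$ on each sub-annulus together with the modulus $\omega$ turns these estimates into a pointwise bound of the form $\widetilde\omega(r_j/R)\|b_j\|_{L^1}/R^d$ with $\widetilde\omega$ again of Dini type, which is integrable over $(2Q_j)^c$. Summation over $j$ and assembly with the good-part estimate then yield the desired weak $(1,1)$ inequality.
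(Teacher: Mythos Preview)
Your setup through the Calder\'on--Zygmund decomposition and the treatment of the good part $g$ are fine and match the paper. The gap is in your handling of the bad part: the per-cube $L^1$ reduction
\[
\int_{(2Q_j)^c}\mathcal V_\rho(\mathcal T_* b_j)(x)\,dx\lesssim \|b_j\|_{L^1}
\]
is \emph{false} in general, so the rest of the plan cannot be salvaged along those lines. Take $d=1$, $K(x,y)=1/(x-y)$, $Q_j=[-1,1]$ and $b_j=\operatorname{sgn}(y)\chi_{[-1,1]}$. For $x=R\gg 1$ the map $\epsilon\mapsto T_\epsilon b_j(x)$ equals $Tb_j(x)\approx R^{-2}$ for $\epsilon<R-1$, drops to $-\log\frac{R+1}{R}\approx -R^{-1}$ at $\epsilon=R$, and returns to $0$ for $\epsilon>R+1$. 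Hence $\mathcal V_\rho(\mathcal T_* b_j)(x)\gtrsim R^{-1}$, which is not integrable on $(2,\infty)$. No amount of short/long splitting of the transition window helps here: for a \emph{single} $b_j$ there is only one relevant dyadic scale, and neither the cancellation of $b_j$ nor the Dini regularity of $K$ produces extra decay inside that window.

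What actually works (and what the paper does) is to abandon the $L^1$--Chebyshev per-cube route for the boundary contribution and instead treat all $b_j$ simultaneously via an $L^2$--Chebyshev argument that uses $\rho>2$ in the form $\ell^2\hookrightarrow\ell^\rho$. One splits the index set $\{j\}$ for each annulus $A_{I_i}(x)$ into cubes $Q_j\subset A_{I_i}(x)$ (handled by cancellation and Dini, exactly as you describe for the ``jump'') and cubes $Q_j$ meeting $\partial A_{I_i}(x)$. For the latter one organizes the $I_i$ by the dyadic shell $J_k=(2^k,2^{k+1}]$ they sit in and the $Q_j$ by their dyadic side-length $2^N$, and proves a pointwise almost-orthogonality estimate of the form
\[
h_{k,N}(x)^2\lesssim \lambda^2\,2^{-|k-N|}\,\frac{1}{2^{kd}}\int_{\{2^{k-1}\le|x-y|<2^{k+2}\}} d_N(y)\,dy,
\]
where $d_N=\sum_{Q_j\in\mathcal D_N}\chi_{Q_j}$. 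The geometric input is that the boundary cubes of size $2^N$ touching $\partial A_{I_i}(x)$ have total measure $\lesssim 2^{(d-1)k+N}$, which produces the gain $2^{N-k}$. Cotlar-type summation in $k,N$ then closes the $L^2$ estimate. This collective mechanism, not a per-cube $L^1$ bound, is what makes the proof go through.
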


\begin{proof}
The proof follows from the one developed in \cite{CampbellJRW00DMJ, CampbellJRW03TAMS} for Hilbert transform and for Calder\'on--Zygmund
operators in higher dimension.
For any $\lz>0$ and function $f\in \lo$, applying
the Calder\'on--Zygmund decomposition to $f$ at height $\lz$ (see, for example,
\cite[Theorem 4.3.1]{Grafakos08}), we then have
$f=g+b$ such that
\begin{itemize}
  \item [${\rm (c_1)}$] $|g(x)|\le 2^d\lz$ for a. e. $x\in\rd$ and $\|g\|_\lo\le\|f\|_\lo$;
  \item [${\rm (c_2)}$] $b=\sum_j b_j$ such that for each $j$,
  $\supp(b_j)\subset Q_j$ and $\{Q_j\}\subset \mathcal D(\rd)$
 is a mutually disjoint sequence of cubes, where $\mathcal D(\rd)$ is the family of  dyadic cubes in $\rd$;
  \item [${\rm (c_3)}$] for each $j$, $\int_\rd b_j(x)\,dx=0$;
  \item [${\rm (c_4)}$] for each $j$, $\|b_j\|_\lo\le 2^{d+1} \lz|Q_j|$;
  \item [${\rm (c_5)}$] $\sum_j |Q_j|\le \frac1\lz\|f\|_\lo$.
\end{itemize}

By the sublinearity of $\mathcal{V}_{\rho}({\mathcal T}_{\ast})$,
we first write
$$\left|\left\{x\in \rd:\,\, \mathcal{V}_{\rho}({\mathcal T}_{\ast}f)(x)>\lz\right\}\right|\le \left|\left\{x\in \rd:\,\, \mathcal{V}_{\rho}({\mathcal T}_{\ast}g)(x)>\frac\lz2\right\}\right|+\left|\left\{x\in \rd:\,\, \mathcal{V}_{\rho}({\mathcal T}_{\ast}b)(x)>\frac\lz2\right\}\right|.$$
Using the $L^q(\rd)$-boundedness of $\mathcal{V}_{\rho}({\mathcal T}_{\ast})$
and ${(c_1)}$, we see that
$$\left|\left\{x\in \rd: \,\, \mathcal{V}_{\rho}({\mathcal T}_{\ast}g) (x)>\frac\lz2\right\}\right|\ls \frac1\lz \|f\|_\lo.$$
Let $\widetilde Q:=\cup_j 9\sqrt dQ_j$. Then ${(c_5)}$ implies
$|\widetilde Q|\ls \frac1\lz\|f\|_\lo.$
Thus, it remains to estimate $\mathcal{V}_{\rho}({\mathcal T}_{\ast}b)(x)$ for $x\in \rd\setminus \widetilde Q$. Observe that $T$ is of weak type (1,1) (see \cite[p.\,192]{Grafakos09}). For every $x\in \rd\setminus \widetilde Q$, by $(c_2)$ and the definition of
$\mathcal{V}_{\rho}({\mathcal T}_{\ast}b)(x)$, we can take a decreasing sequence $\{t_i\}$
such that $t_i\to0$ as $i\to \fz$, and
$$\mathcal{V}_{\rho}({\mathcal T}_{\ast}b)(x)\le
2 \left(\sum_i\left|\sum_j T\left(\chi_{A_{I_i}(x) }b_j\right)(x)\right|^\rho\right)^{1/\rho},$$
where $I_i:=(t_{i+1}, t_i]$ and
$$A_{I_i}(x):=\{y\in\rd:\,\, |x-y|\in I_i\}.$$
Then it suffices to show that
$$\left|\left\{x\in \rd\setminus \widetilde Q:\,\,
\left(\sum_i\left|\sum_j T\left(\chi_{A_{I_i}(x) }b_j\right)(x)\right|^\rho\right)^{1/\rho}>\frac\lz 4\right\}\right|
\ls \frac1\lz\|f\|_\lo.$$
For each $x\in \rd\setminus \widetilde Q$ and $I_i$, consider the following
two sets of indices $j's$:
$$L_{I_i}^1(x):=\{j:\,\, Q_j\subset A_{I_i}(x)\},\,\,\,
L_{I_i}^2(x):=\{j:\,\, Q_j\nsubseteq A_{I_i}(x),\, Q_j\cap A_{I_i}(x)\not=\emptyset\}.$$
Then we see that
\begin{eqnarray*}
\left(\sum_i\left|\sum_j T\left(\chi_{A_{I_i}(x) }b_j\right)(x)\right|^\rho\right)^{1/\rho}
&&\le \left(\sum_i\left|\sum_{j\in L^1_{I_i}(x)} T\left(\chi_{A_{I_i}(x) }b_j\right)(x)\right|^\rho\right)^{1/\rho}\\
&&\quad+\left(\sum_i\left|\sum_{j\in L^2_{I_i}(x)} T\left(\chi_{A_{I_i}(x) }b_j\right)(x)\right|^\rho\right)^{1/\rho}.
\end{eqnarray*}
By $(c_3)$, \eqref{e-cz kernel smooth cond} and the disjointness of $\{A_{I_i}(x)\}$,
\begin{eqnarray*}
&&\left(\sum_i\left|\sum_{j\in L^1_{I_i}(x)} T\left(\chi_{A_{I_i}(x) }b_j\right)(x)\right|^\rho\right)^{1/\rho}\\
&&\quad\le \sum_i \sum_{j\in L^1_{I_i}(x)}\int_\rd |K(x, y)-K(x, y_j)|\chi_{A_{I_i}(x)}(y)|b_j(y)|\,dy\\
&&\quad\le \sum_j \int_\rd |K(x, y)-K(x, y_j)||b_j(y)|\,dy\\
&&\quad\ls \sum_j \int_\rd \omega\left(\frac{|y-y_j|}{|x-y_j|}\right)
\frac1{|x-y_j|^d}|b_j(y)|\,dy,
\end{eqnarray*}
where $y_j$ is the center of $Q_j$. From this, \eqref{e-dini cond}, $(c_4)$ and $(c_5)$, we further deduce that
\begin{eqnarray*}
&&\left|\left\{x\in \rd\setminus \widetilde Q:\,\,
\left(\sum_i\left|\sum_{j\in L^1_{I_i}(x)} T\left(\chi_{A_{I_i}(x) }b_j\right)(x)\right|^\rho\right)^{1/\rho}>\frac\lz 8\right\}\right|\\
&&\quad\ls\frac1\lz \sum_j \int_{\rd}|b_j(y)|\,dy\int_{\rd\setminus \widetilde Q}
\omega\left(\frac{|y-y_j|}{|x-y_j|}\right)
\frac1{|x-y_j|^d}\,dx\\
&&\quad\ls \frac1\lz \sum_j \int_{\rd}|b_j(y)|\,dy\sum_{k=1}^\fz\omega(2^{-k})\ls \frac1\lz\|f\|_\lo.
\end{eqnarray*}

On the other hand, to estimate $(\sum_i|\sum_{j\in L^2_{I_i}(x)} T(\chi_{A_{I_i}(x)}b_j)(x)|^\rho)^{1/\rho}$, as
in \cite{CampbellJRW03TAMS}, we can assume that
for any $I_i\in \{I_i\}$, $I_i$ is either a dyadic interval $J_k:=(2^k, 2^{k+1}]$ for some $k\in\zz$ or a proper subset of a dyadic interval.
Accordingly, we split the set $\{i\}$ of indices into the following subsets:
\begin{align}\label{e-long short vari defn}\bigcup_{k\in\zz}\mathcal S_k:=\bigcup_{k\in\zz}\left\{i:\,\, I_i\subseteq J_k\right\}.
\end{align}
Note that if $j\in L_{I_i}^2(x)$, then $Q_j\cap \partial A_{I_i}(x)\not=\emptyset$, where $\partial A_{I_i}(x)$ is the boundary of $A_{I_i}(x)$.

Moreover, for each $N\in \zz$, let $\mathcal D_N(\rd)$ be the subset of $\mathcal D(\rd)$ of dyadic cubes having side-length $2^N$. Since $\rho>2$ as in the definition of the variation in \eqref{def variation}, now we have
\begin{eqnarray}\label{eee1}
&&\left|\left\{x\in \rd\setminus \widetilde Q:\,\,
\left(\sum_{i\in \mathbb N}\left|\sum_{j\in L^2_{I_i}(x)} T\left(\chi_{A_{I_i}(x) }b_j\right)(x)\right|^\rho\right)^{1/\rho}>\frac\lz {16}\right\}\right|\\
&&\quad\ls \frac1{\lz^2}\int_{\rd\setminus \wz Q}\sum_{i\in \mathbb N}
\left|\sum_{j\in \zz}\chi_{L^2_{I_i}(x)}(j) T\left(\chi_{A_{I_i}(x) }b_j\right)(x)\right|^2\,dx\nonumber\\
&&\quad=\frac1{\lz^2}\int_{\rd\setminus \wz Q}\sum_{k\in\zz}\sum_{i\in \mathcal S_k}\left|\sum_{N\in\zz} T\left(\sum_{Q_j\in \mathcal D_N(\rd)}\chi_{L^2_{I_i}(x)}(j)\chi_{A_{I_i}(x)}b_j\right)(x)\right|^2\,dx\nonumber\\
&&\quad\le \sum_{k\in\zz}\frac1{\lz^2}\int_{\rd\setminus \wz Q}\left|\sum_{N\in\zz}h_{k,\,N}(x)\right|^2\,dx,\nonumber
\end{eqnarray}
where for each $k$ and $N$, $\mathcal S_k$ is as in \eqref{e-long short vari defn} and
\begin{align}\label{e-hkN func defn}
h_{k,\,N}(x):=\left[\sum_{i\in\mathcal S_k}\left|
T\left(\sum_{Q_j\in \mathcal D_N(\rd)}\chi_{L^2_{I_i}(x)}(j)\chi_{A_{I_i}(x) }b_j\right)(x)\right|^2\right]^\frac12.
\end{align}

It remains to show that
\begin{align*}
\sum_{k\in\zz}\int_\rd\left|\sum_{N\in\zz}\chi_{\rd\setminus \wz Q}(x)h_{k,\,N}(x)\right|^2\,dx\ls \lz^2\sum_{N\in\zz}\|d_N\|_{L^2(\rd)}^2,
\end{align*}
where for each $N$,
$$d_N(x):=\sum_{Q_j\in \mathcal D_N(\rd)}\chi_{Q_j}(x).$$
By the well-known almost orthogonality lemma, it is sufficient to show that, for every $k$ and $N$,
\begin{align*}
\int_\rd\left|\chi_{\rd\setminus \wz Q}(x)h_{k,\,N}(x)\right|^2\,dx\ls \lz^22^{-|k-N|}\|d_N\|_{L^2(\rd)}^2.
\end{align*}

We now show that for any $x\in \rd\setminus \wz Q$,
\begin{align}\label{e-almost orthog h_{k,N}}
h_{k,\,N}(x)^2\ls \lz^22^{-|k-N|}\frac1{2^{kd}}
\int_{\{y:\,2^{k-1}\le |x-y|<2^{k+2}\}}d_N(y)\,dy.
\end{align}
Observe that for $i\in \mathcal S_k$ and  $Q_j\in \mathcal D_N(\rd)$, if $N> k-1-\log\sqrt d$
and $x\in\rd\setminus \wz Q$, then
we must have $Q_j\cap A_{I_i}(x)=\emptyset$; for otherwise,
for any $y\in Q_j\cap A_{I_i}(x)$,
$$4\sqrt d 2^N=\frac92\sqrt d2^N-\frac{\sqrt d}2 2^N\le |x-x_j|-|x_j-y|\le |x-y|\leq 2^{k+1},$$
which is impossible. This via \eqref{e-hkN func defn} implies that $h_{k,\,N}(x)=0$ for $N> k-1-\log\sqrt d$
and $x\in\rd\setminus \wz Q$. Thus, for $i\in\mathcal S_k$, we only need to prove
\eqref{e-almost orthog h_{k,N}} for $N\leq k-1-\log\sqrt d$.
For  each $i\in \mathcal S_k$ and $x\in\rd\setminus \wz Q$, let
$$g_N:=\sum_{j\in L^2_{I_i}(x):\,Q_j\in \mathcal D_N(\rd)}b_j \,\,{\rm and }\,\,P_i(x):=\bigcup_{j\in L^2_{I_i}(x):\, Q_j\in \mathcal D_N(\rd)}Q_j.$$
Then for any $y\in P_i(x)$, there exists $j\in L^2_{I_i}(x)$, $Q_j\in \mathcal D_N(\rd)$ containing $y$. Moreover, since $Q_j\cap A_{I_i}(x)\not=\emptyset$, there exists $z_j\in Q_j\cap A_{I_i}(x)$. By the fact $i\in \mathcal S_k$, we see
that
$2^k<|z_j-x|\le2^{k+1}$. This further implies that
\begin{align}\label{e-anlus-1}
|y-x|\ge |z_j-x|-|z_j-y|>2^k-\sqrt d 2^N\ge 2^{k-1}
\end{align}
and
\begin{align}\label{e-anlus-2}
|y-x|\le |z_j-x|+|z_j-y|< 2^{k+2}.
\end{align}
Therefore, we have that $P_i(x)\subset \{y: 2^{k-1}<|x-y|\le 2^{k+2}\}$ and hence,
$$h_{k,\,N}(x)^2\le \sum_{i\in \mathcal S_k}\left[\int_{P_i(x)}
|K(x,y)||g_N(y)|\,dy\right]^2\ls \frac1{2^{2kd}}\sum_{i\in \mathcal S_k}\left[\int_{P_i(x)}
|g_N(y)|\,dy\right]^2.$$
For each $i\in \mathcal S_k$, from $(c_4)$, we deduce that
\begin{align*}
\int_{P_i(x)}|g_N(y)|\,dy&\ls \sum_{j\in L^2_{I_i}(x):\,Q_j\in \mathcal D_N(\rd)} \int_{P_i(x)}
|b_j(y)|\,dy
\ls\sum_{j\in L^2_{I_i}(x):\,Q_j\in \mathcal D_N(\rd)} \lz |Q_j|\ls \lz 2^{(d-1)k+N},
\end{align*}
where the last inequality comes from the facts
that $i\in \mathcal S_k$ with $k>N+1+\log\sqrt d$, that
$Q_j\cap \partial A_{I_i}(x)\not=\emptyset$,
and that $\{Q_j\}$ are mutually disjoint.

Note that $\{I_i\}_{i\in\mathcal S_k}$ forms a partition of $(2^k, 2^{k+1}]$.
By using \eqref{e-anlus-1}, \eqref{e-anlus-2} and $(c_4)$, we now conclude that
\begin{align*}
h_{k,\,N}(x)^2&\ls \frac\lz{2^{kd}}2^{N-k}\sum_{i\in \mathcal S_k}\int_{P_i(x)}
|g_N(y)|\,dy\\
&\le\frac\lz{2^{kd}}2^{N-k}\sum_{i\in \mathcal S_k}\int_{P_i(x)}\sum_{j\in L^2_{I_i}(x):\,Q_j\in \mathcal D_N(\rd)}\left|b_j(y)\right|\,dy\\
&\ls\frac\lz{2^{kd}}2^{N-k}\sum_{j\in L^2_{I_i}(x)\,{\rm for\,some\,}i\in \mathcal S_k:\,Q_j\in \mathcal D_N(\rd)}\int_{\rd}\left|b_j(y)\right|\,dy\\
&\ls \frac{\lz^2}{2^{kd}}2^{N-k}\sum_{j\in L^2_{I_i}(x)\,{\rm for\,some\,}i\in \mathcal S_k:\,Q_j\in \mathcal D_N(\rd)}|Q_j|\\
&\ls \frac{\lz^2}{2^{kd}}2^{N-k}\int_{\{y:\,2^{k-1}<|x-y|\le2^{k+2}\}}d_N(y)\,dy.
\end{align*}
This shows \eqref{e-almost orthog h_{k,N}} and hence finishes the proof of Proposition  \ref{p-vari lp bdd imp weal type}.
\end{proof}

Under the assumption of Theorem \ref{p-sparse domi}, we see that $\mathcal{V}_{\rho}({\mathcal T}_{\ast})$ is of weak type (1,1) by Proposition \ref{p-vari lp bdd imp weal type}. Based on this fact, we then have the following pointwise estimates; see \cite{ChengMa18} for the proof.

\begin{lem} \label{l-vari pointwise domin}
For a.e. $x\in Q_0$,
$$ \mathcal{V}_{\rho}({\mathcal T}_{\ast}(f\chi_{3Q_0}))(x)\leq C\|\mathcal{V}_{\rho}({\mathcal T}_{\ast})\|_{L^1(\rd)\to L^{1,\infty}(\rd)} |f(x)| + \mathcal M_{\mathcal{V}_{\rho}({\mathcal T}_{\ast}),\,Q_0}f(x). $$
\end{lem}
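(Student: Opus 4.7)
The approach follows the classical Lerner-style pointwise domination argument, with the weak type $(1,1)$ bound for $\mathcal{V}_\rho({\mathcal T}_\ast)$ from Proposition~\ref{p-vari lp bdd imp weal type} as the key quantitative input. Abbreviate $g(y):=\mathcal{V}_\rho({\mathcal T}_\ast(f\chi_{3Q_0}))(y)$ and $A:=\|\mathcal{V}_\rho({\mathcal T}_\ast)\|_{L^1(\rd)\to L^{1,\infty}(\rd)}$. Since $f\in L^\infty$ and $|3Q_0|<\infty$, $f\chi_{3Q_0}\in L^1(\rd)$, and Proposition~\ref{p-vari lp bdd imp weal type} places $g\in L^{1,\infty}(\rd)$; in particular $g$ is measurable and a.e.\ finite. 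I would then fix $x\in Q_0$ that is simultaneously a Lebesgue point of $|f|$ and a point of approximate continuity of $g$; both properties hold off a single null set.

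For any cube $Q$ with $x\in Q\subset Q_0$, sublinearity of $\mathcal{V}_\rho({\mathcal T}_\ast)$---inherited from Minkowski's inequality on $\ell^\rho$---gives, for a.e.\ $\xi\in Q$,
\[
g(\xi)\,\le\,\mathcal{V}_\rho({\mathcal T}_\ast(f\chi_{3Q}))(\xi)+\mathcal{V}_\rho({\mathcal T}_\ast(f\chi_{3Q_0\setminus 3Q}))(\xi),
\]
and, by the definition \eqref{Gmaximal}, the second summand is majorized by $\mathcal{M}_{\mathcal{V}_\rho({\mathcal T}_\ast),Q_0}f(x)$ at a.e.\ $\xi\in Q$. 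For the first summand, the weak type $(1,1)$ bound combined with Chebyshev yields
\[
\bigl|\{\xi\in Q:\ \mathcal{V}_\rho({\mathcal T}_\ast(f\chi_{3Q}))(\xi)>4\cdot 3^{d}A\,|f|_{3Q}\}\bigr|\,\le\,\tfrac14|Q|.
\]
Approximate continuity of $g$ at $x$ furnishes, for every $\eta>0$ and every sufficiently small $Q$, the complementary bound
\[
\bigl|\{\xi\in Q:\ g(\xi)<g(x)-\eta\}\bigr|\,<\,\tfrac14|Q|.
\]

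A straightforward pigeonhole then yields a single $\xi^\ast\in Q$ at which all three good events occur simultaneously, forcing
\[
g(x)-\eta\,\le\,g(\xi^\ast)\,\le\,4\cdot 3^{d}A\,|f|_{3Q}+\mathcal{M}_{\mathcal{V}_\rho({\mathcal T}_\ast),Q_0}f(x).
\]
Shrinking $Q$ to $\{x\}$ sends $|f|_{3Q}\to|f(x)|$ by the Lebesgue-point property, and letting $\eta\to 0$ delivers the claimed inequality with $C=4\cdot 3^{d}$.

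The main subtle point I expect is the transfer of information from a positive-measure subset of $Q$---namely the ess-sup definition of $\mathcal{M}_{\mathcal{V}_\rho({\mathcal T}_\ast),Q_0}$ and the good-set produced by Chebyshev---back to the pointwise value $g(x)$. Approximate continuity is the natural bridge, but it crucially needs $g$ to be a.e.\ finite; this is precisely why Proposition~\ref{p-vari lp bdd imp weal type} is a prerequisite rather than merely a companion statement. Everything else amounts to careful bookkeeping within the standard Lerner template.
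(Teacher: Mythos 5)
Your proof is correct and is precisely the standard Lerner-type argument (split off $f\chi_{3Q}$, control the far part by the grand maximal truncated operator, apply Chebyshev with the weak $(1,1)$ bound from Proposition \ref{p-vari lp bdd imp weal type}, and pass back to the point $x$ via approximate continuity and the Lebesgue differentiation theorem); the paper does not reproduce a proof but defers to \cite{ChengMa18}, where the same template is used. The only point worth recording explicitly is that approximate continuity is invoked for non-centered cubes $Q\ni x$, which is harmless since such $Q$ sit inside the concentric cube of side $2l(Q)$, so the density of the bad set in $Q$ is controlled up to a factor $2^d$.
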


Using some idea in \cite{GillespieTorrea04IsraelJM}, we have the following conclusion on the boundedness of the local grand maximal truncated operator
$\mathcal M_{\mathcal{V}_{\rho}({\mathcal T}_{\ast}),Q_0}$ as in Definition \ref{Gmaximal}.

\begin{prop}\label{p-grand maxi truncat bdd}
Let $Q_0$ be a fixed cube and $p\in(1,q]$ where $q$ is as in Theorem \ref{t-lp bdd vari matrix weigh}. Then $\mathcal M_{\mathcal{V}_{\rho}({\mathcal T}_{\ast}),Q_0}$
is bounded on $L^p(\mathbb R^d)$ and of weak type (1,1). Moreover,
$\|\mathcal M_{\mathcal{V}_{\rho}({\mathcal T}_{\ast}),Q_0}\|_{L^p(\rd)\to L^p(\rd)}$ and $\|\mathcal M_{\mathcal{V}_{\rho}({\mathcal T}_{\ast}),Q_0}\|_{L^1(\mathbb R^d)\to L^{1,\,\fz}(\rd)}$ are independent of
$Q_0$.
\end{prop}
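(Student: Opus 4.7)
The plan is to establish, for a.e.\ $x \in Q_0$, the pointwise domination
\begin{align*}
\mathcal M_{\mathcal V_\rho(\mathcal T_*), Q_0} f(x) \leq C\big( Mf(x) + \mathcal V_\rho(\mathcal T_*(f\chi_{3Q_0}))(x) \big),
\end{align*}
where $M$ denotes the Hardy--Littlewood maximal operator and $C$ depends only on the dimension and the kernel constants (hence is independent of $Q_0$). Once this is in place, the $L^p$-boundedness for $p \in (1,q]$ and the weak type $(1,1)$ of $\mathcal M_{\mathcal V_\rho(\mathcal T_*), Q_0}$ follow from the known bounds on $M$ and $\mathcal V_\rho(\mathcal T_*)$: the former enjoys both, and the latter is of weak type $(1,1)$ by Proposition~\ref{p-vari lp bdd imp weal type} and hence bounded on $L^p(\rd)$ for $p \in (1,q]$ by Marcinkiewicz interpolation with the hypothesized $L^q$-bound. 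Uniformity of the constants in $Q_0$ is immediate from $\|f\chi_{3Q_0}\|_{L^p(\rd)} \leq \|f\|_{L^p(\rd)}$ and the analogous $L^{1,\infty}$-inequality.

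For the pointwise bound, fix a cube $Q \subset Q_0$ with $x \in Q$ and a point $\xi \in Q$. Two geometric observations drive the argument: first, since $\dist(\xi, 3Q_0 \setminus 3Q) \geq l(Q)$, the map $\eps \mapsto T_\eps(f\chi_{3Q_0\setminus 3Q})(\xi)$ is constant for $\eps \leq l(Q)$; second, for $\eps \geq 2\sqrt d\, l(Q)$ the ball $B(\xi, \eps)$ contains $3Q$, so $T_\eps(f\chi_{3Q_0\setminus 3Q})(\xi) = T_\eps(f\chi_{3Q_0})(\xi)$. Splitting the $\rho$-variation at $\eps = 2\sqrt d\, l(Q)$ thus yields
\begin{align*}
\mathcal V_\rho(\mathcal T_*(f\chi_{3Q_0\setminus 3Q}))(\xi) \leq \mathcal V_\rho(\mathcal T_*(f\chi_{3Q_0}))(\xi) + \mathcal J_Q(\xi),
\end{align*}
where $\mathcal J_Q(\xi)$ collects the variation over the bounded dyadic transition range $\eps \in [l(Q), 2\sqrt d\, l(Q)]$.

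The transition $\mathcal J_Q(\xi)$ spans only $O(1)$ dyadic scales; on each scale, the kernel size bound \eqref{e-cz kernel size cond} combined with the containment $B(\xi, c\, l(Q)) \subset B(x, c'\, l(Q))$ estimates each jump by a ball-average of $|f|$ around $x$, so that $\mathcal J_Q(\xi) \lesssim Mf(x)$. To compare $\mathcal V_\rho(\mathcal T_*(f\chi_{3Q_0}))(\xi)$ with its value at $x$, I would decompose
\begin{align*}
T_\eps(f\chi_{3Q_0})(\xi) - T_\eps(f\chi_{3Q_0})(x) &= \int_{3Q_0}\big[K(\xi, y) - K(x, y)\big]\chi_{|y-x|>\eps}\, f(y)\,dy \\
&\quad + \int_{3Q_0} K(\xi, y)\big[\chi_{|y-\xi|>\eps} - \chi_{|y-x|>\eps}\big]\, f(y)\,dy.
\end{align*}
The monotonicity of $\chi_{|y-x|>\eps}$ in $\eps$ in the first integral reduces its $\ell^\rho$-variation (bounded by its $\ell^1$-variation since $\rho \geq 1$) to the single scalar integral $\int|K(\xi, y) - K(x, y)||f(y)|\chi_{3Q_0}(y)\,dy$; the kernel smoothness \eqref{e-cz kernel smooth cond}, a dyadic annular decomposition around $x$, and the Dini condition \eqref{e-dini cond} together bound this by $C\, Mf(x)$. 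The indicator-difference integral is supported on the thin symmetric-difference shell $B(\xi, \eps) \triangle B(x, \eps)$ of thickness $\leq |x-\xi| \leq \sqrt d\, l(Q)$; a dyadic analysis over scales $\eps \sim 2^k\, l(Q)$ using \eqref{e-cz kernel size cond} produces a geometric series $\sum_{k \geq 0} 2^{-k}\, Mf(x) \lesssim Mf(x)$ that bounds its variation.

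The main technical obstacle is the simultaneous $\eps$-dependence of both the truncation indicator $\chi_{|y-\cdot|>\eps}$ and the kernel-weighted integrand, which prevents pulling $\sup_{\eps_i}$ inside the integrals. This is circumvented by the kernel-/indicator-difference decomposition above, which reduces everything either to integrals against monotone-in-$\eps$ indicators (where $\ell^\rho$-variation is controlled by $\ell^1$-variation and then by a single absolute integral) or to thin-shell integrals at each single dyadic scale; in both cases the Dini condition \eqref{e-dini cond} makes the resulting dyadic sums converge uniformly in the choice of sequence $\{\eps_i\}$.
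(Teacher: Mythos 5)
There is a genuine gap, and it sits exactly at the hardest point of the proposition: the indicator--difference (thin shell) term cannot be bounded by $C\,\mathcal Mf(x)$, so your claimed pointwise domination $\mathcal M_{\mathcal{V}_{\rho}({\mathcal T}_{\ast}),Q_0}f\lesssim \mathcal Mf+\mathcal{V}_{\rho}({\mathcal T}_{\ast}(f\chi_{3Q_0}))$ is false, and with it the entire deduction of the weak type $(1,1)$ collapses. Your ``geometric series $\sum_k 2^{-k}\mathcal Mf(x)$'' implicitly uses that the average of $|f|$ over the symmetric-difference shell $S_\varepsilon:=B(\xi,\varepsilon)\triangle B(x,\varepsilon)$ (of measure $\approx\varepsilon^{d-1}l(Q)$) is controlled by $\mathcal Mf(x)$; but $S_\varepsilon$ is not a ball, and its measure is a factor $2^{-k}$ smaller than that of $B(x,2\varepsilon)$, so the only unconditional bound is $\varepsilon^{-d}\int_{S_\varepsilon}|f|\le\varepsilon^{-d}\int_{B(x,2\varepsilon)}|f|\lesssim\mathcal Mf(x)$ with \emph{no} gain of $2^{-k}$. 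Concretely, placing bumps of mass $m_k\approx a\,2^{kd}l(Q)^d$ inside the shells $S_{2^kl(Q)}$ for $k=1,\dots,N$ (each positioned in $B(x,\varepsilon)\setminus B(\xi,\varepsilon)$) gives $\mathcal Mf(x)\approx a$ while a suitable choice of $\{\varepsilon_j\}$ extracts $N$ jumps each of size $\approx a$, so the $\rho$-variation of the indicator-difference term is $\gtrsim aN^{1/\rho}\gg \mathcal Mf(x)$. This is precisely why the paper estimates this term by H\"older's inequality with an exponent $r\in(1,\min\{q,\rho\})$ on each shell, arriving at the bound $\mathcal M_{\mathcal{V}_{\rho}({\mathcal T}_{\ast}),Q_0}f(x)\lesssim[\mathcal M(|f|^r)(x)]^{1/r}+\mathcal{V}_{\rho}({\mathcal T}_{\ast}f)(x)$: the factor $|S_\varepsilon|^{1/r'}\lesssim(\varepsilon^{d-1}l(Q))^{1/r'}$ supplies the summable gain $(l(Q)/\varepsilon)^{1/r'}$, while the disjointness of the annuli $\{\varepsilon_{j+1}<|\xi-y|\le\varepsilon_j\}$ collapses the $\ell^r$-sum into a single convergent integral.

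Even after this repair, your strategy only yields the $L^p$-boundedness for $p\in(1,q]$ (choose $r<p$), because $[\mathcal M(|f|^r)]^{1/r}$ with $r>1$ is \emph{not} of weak type $(1,1)$. The weak $(1,1)$ bound — which is the part of the proposition actually used downstream, in defining the exceptional set in the proof of Theorem \ref{p-sparse domi} — does not follow from any pointwise maximal-function domination of this kind. The paper proves it separately by a Calder\'on--Zygmund decomposition of $f$ at height $\lambda$, reducing to the bad part $b=\sum_jb_j$ off the dilated cubes, splitting the indices $j$ into three classes according to whether $Q_j$ sits inside the truncation annulus, meets $\partial(3Q_0)$, or meets $\partial B_x$ or $\partial A_{I_i}(\xi)$, and handling the last class by passing from $\ell^\rho$ to $\ell^2$ and running an almost-orthogonality argument on the functions $h_{k,N}$ — essentially re-doing the proof of Proposition \ref{p-vari lp bdd imp weal type} in the presence of the extra supremum over $Q$ and $\xi$. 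Your proposal contains no substitute for this argument, so the weak type $(1,1)$ claim is unproved.
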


\begin{proof}
We first show that $\mathcal M_{\mathcal{V}_{\rho}({\mathcal T}_{\ast}),Q_0}$
is bounded on $L^q(\rd)$. To this end, let $r\in(1, \min\{q, \rho\})$. By the
$L^q(\rd)$-boundedness of $\mathcal{V}_{\rho}({\mathcal T}_{\ast})$, it
suffices to show that for any $f\in L^q(\rd)$ and a. e. $x\in Q_0$,
\begin{align}\label{e-grand maxi trunc pointwise upp bdd}
\mathcal M_{\mathcal{V}_{\rho}({\mathcal T}_{\ast}),Q_0}f(x)
\le C\left[\cm\left(|f|^r\right)(x)\right]^{1/r}+\mathcal{V}_{\rho}({\mathcal T}_{\ast}f)(x)
\end{align}
with the positive constant $C$  independent of $Q_0$, $f$ and $x$.
Here $\mathcal Mf(x)$ is the  Hardy--Littlewood maximal function, defined as
\begin{align}\label{e-Hardy L maxi func defn}
\mathcal Mf(x):=\sup_{Q \ni x} {\frac1{|Q|}}\int_Q |f(y)|\,dy,
\end{align}
where the supremum is taken over all cubes $Q\subset \rd$.

To show \eqref{e-grand maxi trunc pointwise upp bdd}, for any $x\in Q_0$, $Q\ni x$, and $\xi\in Q$, let
\begin{align}\label{e-Bx defi}
B_x:=B(x, 9 dl(Q))\, {\rm and}\,\widetilde B_x:=B(x, 3\sqrt dl(Q_0)).
 \end{align}
 Then $3Q\subset B_x$ and
$3Q_0\subset \widetilde B_x$.
We write
\begin{eqnarray*}
\mathcal{V}_{\rho}\big({\mathcal T}_{\ast}(f\chi_{3Q_0\setminus3Q})\big)(\xi)&
&\le  \left|\mathcal{V}_{\rho}\big({\mathcal T}_{\ast}(f\chi_{3Q_0\setminus B_x})(\xi)
- \mathcal{V}_{\rho}\big({\mathcal T}_{\ast}(f\chi_{3Q_0\setminus B_x}))(x)\right|\\
&&\quad+\mathcal{V}_{\rho}\big({\mathcal T}_{\ast}(f\chi_{(B_x\cap3Q_0)\setminus 3Q})(\xi)+\mathcal{V}_{\rho}\big({\mathcal T}_{\ast}(f\chi_{3Q_0\setminus B_x}))(x)\\
&&=:{\rm I}+{\rm II}+{\rm III}.
\end{eqnarray*}
We first consider the term ${\rm II}$. From the definition of $\mathcal{V}_{\rho}\big({\mathcal T}_{\ast})$ and Minkowski's inequality, we see that
\begin{align*}
{\rm II}&\ls \sup_{\varepsilon_i\searrow0}
 \bigg(\sum_{i=1}^{\infty}\Big| \int_{\substack{\varepsilon_{i+1}<|\xi-y|\leq \varepsilon_i\\
 y\in (B_x\cap3Q_0)\setminus 3Q}} |K(\xi, y)||f(y)|\,dy \Big|^{\rho}\bigg)^{1/\rho}
\\&\ls \sup_{\varepsilon_i\searrow0}
  \int_{
 y\in (B_x\cap3Q_0)\setminus 3Q}  \Big(\sum_{i=1}^{\infty} \chi_{\{i:\ \varepsilon_{i+1}<|\xi-y|\leq \varepsilon_i\}}(i) |K(\xi, y)|^{\rho}|f(y)|^{\rho} \Big)^{1/\rho}\,dy
\\&\ls \sup_{\varepsilon_i\searrow0}
  \int_{
 y\in B_x\cap3Q_0, |\xi-y|\geq l(Q)}   |K(\xi, y)||f(y)| \,dy.
\end{align*}
Since $|B_x|\approx |Q|$, by the size condition of the kernel $K(x,y)$ as in \eqref{e-cz kernel size cond} and the
H\"older inequality, we obtain that
\begin{align*}
{\rm II}
&\ls \int_{B_x} {1\over |\xi-y|^d} |f(y)|\,dy\ls\frac1{|B_x|}\int_{B_x}|f(y)|\,dy\ls \left[\cm\left( |f|^r\right)(x)\right]^{1/r}.
\end{align*}

We now estimate the term ${\rm III}$.
By the sublinearity of $\mathcal{V}_{\rho}\big({\mathcal T}_{\ast}f)$,
we see that
\begin{align*}
{\rm III}&\le \mathcal{V}_{\rho}\big({\mathcal T}_{\ast}(f\chi_{\widetilde B_x\setminus B_x}))(x)+\mathcal{V}_{\rho}\big({\mathcal T}_{\ast}(f\chi_{\widetilde B_x\setminus 3Q_0}))(x)\\
&\le \mathcal{V}_{\rho}\big({\mathcal T}_{\ast}(f))(x)
+C\left[\cm\left( |f|^r\right)(x)\right]^{1/r},
\end{align*}
where the last inequality follows from using the definition of $\mathcal{V}_{\rho}\big({\mathcal T}_{\ast}f)$ for the first term, and from repeating the estimate in the term {\rm II} for the second term.

It remains consider the term {\rm I}. We claim that
\begin{align}\label{e-I pointwise upp bdd}
{\rm I}\ls \left[\cm\left( |f|^r\right)(x)\right]^{1/r}.
\end{align}
We write $\tilde f(y):=f(y)\chi_{3Q_0\setminus B_x}(y)$ and
\begin{align*}
{\rm I}&\le
\sup_{\varepsilon_j}\left\{\sum_j\left|\int_{\rd}\left[K(\xi, y)\chi_{\{\varepsilon_{j+1}<|\xi-y|\le \varepsilon_j\}}(y)-K(x, y)\chi_{\{\varepsilon_{j+1}<|x-y|\le \varepsilon_j\}}(y)\right]
\tilde f(y)\,dy\right|^\rho\right\}^{1/\rho}\\
&\le \sup_{\varepsilon_j}\left[\sum_j\left|\int_{\rd}[K(\xi, y)-K(x, y)]\chi_{\{\varepsilon_{j+1}<|\xi-y|\le \varepsilon_j\}}(y)
\tilde f(y)\,dy\right|^\rho\right]^{1/\rho}\\
&\quad+\sup_{\varepsilon_j}\left\{\sum_j\left|\int_{\rd}K(x, y)\left[ \chi_{\{\varepsilon_{j+1}<|\xi-y|\le \varepsilon_j\}}(y)-\chi_{\{\varepsilon_{j+1}<|x-y|\le \varepsilon_j\}}(y)\right]
\tilde f(y)\,dy\right|^\rho\right\}^{1/\rho}\\
&=:{\rm I}_1+{\rm I}_2.
\end{align*}
We first consider the term ${\rm I}_1$. Similar to the estimate of {\rm II} above, from the smoothness condition of the kernel $K(x,y)$ as in \eqref{e-cz kernel smooth cond}, we deduce that
\begin{align*}
{\rm I}_1\le\int_{\rd\setminus B_x}\left|K(\xi, y)-K(x, y)\right| |f(y)|\,dy
\ls \left[\cm\left( |f|^r\right)(x)\right]^{1/r}.
\end{align*}

To estimate ${\rm I}_2$, note that
$$\chi_{\{\varepsilon_{j+1}<|\xi-y|\le \varepsilon_j\}}(y)-\chi_{\{\varepsilon_{j+1}<|x-y|\le \varepsilon_j\}}(y)\not=0$$
if and only if at least one of the following four statements holds:
\begin{itemize}
 \item [{\rm  (i)}]$\varepsilon_{j+1}<|\xi-y|\le\varepsilon_{j}$ and $|x-y|\le \varepsilon_{j+1}$;
  \item [{\rm (ii)}]$\varepsilon_{j+1}<|\xi-y|\le\varepsilon_{j}$  and $|x-y|> \varepsilon_{j}$;
  \item[{\rm (iii)}] $\varepsilon_{j+1}<|x-y|\le\varepsilon_{j}$ and $|\xi-y|\le \varepsilon_{j+1}$;
  \item [{\rm (iv)}]$\varepsilon_{j+1}<|x-y|\le\varepsilon_{j}$ and $|\xi-y|> \varepsilon_{j}$.
\end{itemize}
This together with the fact that $|x-\xi|\le\sqrt dl(Q)$ implies the following four cases:
\begin{itemize}
 \item [{\rm  (i')}]$\varepsilon_{j+1}<|\xi-y|\le \varepsilon_{j+1}+\sqrt dl(Q)$;
  \item [{\rm (ii')}]$ \varepsilon_{j}<|x-y|\le \varepsilon_{j}+\sqrt dl(Q)$;
  \item[{\rm (iii')}] $\varepsilon_{j+1}<|x-y|\le \varepsilon_{j+1}+\sqrt dl(Q)$;
  \item [{\rm (iv')}]$\varepsilon_{j}<|\xi-y|\le \varepsilon_{j}+\sqrt dl(Q)$.
\end{itemize}
We further have that
\begin{align*}
&\left|\int_{\rd}K(x, y)\left[ \chi_{\{\varepsilon_{j+1}<|\xi-y|\le \varepsilon_j\}}(y)-\chi_{\{\varepsilon_{j+1}<|x-y|\le \varepsilon_j\}}(y)\right]
\tilde f(y)\,dy\right|\\
&\quad\le \int_{\rd}|K(x, y)|\chi_{\{\varepsilon_{j+1}<|\xi-y|\le \varepsilon_j\}}(y)\chi_{\{\varepsilon_{j+1}<|\xi-y|\le \varepsilon_{j+1}+\sqrt dl(Q)\}}(y)\left|\tilde f(y)\right|\,dy\\
&\quad\quad+\int_{\rd}|K(x, y)|\chi_{\{\varepsilon_{j+1}<|\xi-y|\le \varepsilon_j\}}(y)\chi_{\{\varepsilon_{j}<|x-y|\le \varepsilon_{j}+\sqrt dl(Q)\}}(y)\left|\tilde f(y)\right|\,dy\\
&\quad\quad+\int_{\rd}|K(x, y)|\chi_{\{\varepsilon_{j+1}<|x-y|\le \varepsilon_j\}}(y)\chi_{\{\varepsilon_{j+1}<|x-y|\le \varepsilon_{j+1}+\sqrt dl(Q)\}}(y)\left|\tilde f(y)\right|\,dy\\
&\quad\quad+\int_{\rd}|K(x, y)|\chi_{\{\varepsilon_{j+1}<|x-y|\le \varepsilon_j\}}(y)\chi_{\{\varepsilon_{j}<|\xi-y|\le \varepsilon_{j}+\sqrt dl(Q)\}}(y)\left|\tilde f(y)\right|\,dy\\
&\quad=:\sum_{k=1}^4{\rm I}_{2,\,k,\,j}.
\end{align*}
By similarity, we only estimate ${\rm I}_{2,\,1,\,j}$. If $\varepsilon_{j+1}< \sqrt dl(Q)$,
 we see that
$${\rm I}_{2,\,1,\,j}\le \int_{\rd\setminus B_x}|K(x,y)|\chi_{\{|\xi-y|< 2\sqrt dl(Q)\}}(y)|\tilde f(y)|\,dy=0.$$
If $\varepsilon_{j+1}\ge \sqrt dl(Q)$, by the H\"older inequality and \eqref{e-cz kernel size cond}, for $r\in(1, \min\{q,\rho\})$, we have
\begin{align*}
{\rm I}_{2,\,1,\,j}&\ls
\left[\int_{\rd}|K(x,y)|^r\left|\tilde f(y)\right|^r
\chi_{\{\varepsilon_{j+1}<|\xi-y|\le \varepsilon_j\}}(y)\,dy\right]^{1/r}
\left[\left(\varepsilon_{j+1}+\sqrt dl(Q)\right)^d-\varepsilon_{j+1}^d\right]^{1/{r'}}\\
&\ls\left[\int_{\rd}\frac{|\tilde f(y)|^r}{|x-y|^{rd}}\chi_{\{\varepsilon_{j+1}<|\xi-y|\le \varepsilon_j\}}(y)\,dy\right]^{1/r}
\left[\varepsilon_{j+1}^{d-1}l(Q)\right]^{1/{r'}}\\
&\ls \left[\int_{\rd}\frac{|\tilde f(y)|^r}{|x-y|^{r+d-1}}\chi_{\{\varepsilon_{j+1}<|\xi-y|\le \varepsilon_j\}}(y)\,dy\right]^{1/r}
\left[l(Q)\right]^{1/{r'}}.
\end{align*}
Since $r<\rho$, we then conclude that
\begin{align*}
\sup_{\{\varepsilon_j\}}\left(\sum_{j}{\rm I}^\rho_{2,\,1,j}\right)^{1/\rho}
&\ls \sup_{\{\varepsilon_j\}}\left(\sum_{j}
\left[\int_{\rd}\frac{|\tilde f(y)|^r}{|x-y|^{r+d-1}}\chi_{\{\varepsilon_{j+1}<|\xi-y|\le \varepsilon_j\}}(y)\,dy\right]^{\rho/r}\right)^{1/\rho}\left[l(Q)\right]^{1/{r'}}\\
&\le \sup_{\{\varepsilon_j\}}\left(\sum_{j}
\int_{\rd}\frac{|\tilde f(y)|^r}{|x-y|^{r+d-1}}\chi_{\{\varepsilon_{j+1}<|\xi-y|\le \varepsilon_j\}}(y)\,dy\right)^{1/r}\left[l(Q)\right]^{1/{r'}}\\
&\le \left(\int_{\rd\setminus B_x}\frac{|f(y)|^r}{|x-y|^{r+d-1}}\,dy\right)^{1/r}\left[l(Q)\right]^{1/{r'}}\\
&\ls \left[\cm\left(|f|^r\right)(x)\right]^{1/r}.
\end{align*}
Thus, we have
$${\rm I}_2\ls \left[\cm\left(|f|^r\right)(x)\right]^{1/r}.$$
This together with the estimate of ${\rm I}_1$ implies \eqref{e-I pointwise upp bdd}, and hence \eqref{e-grand maxi trunc pointwise upp bdd} holds.

By the Marcinkiewicz interpolation theorem, to finish the proof of Proposition
\ref{p-grand maxi truncat bdd}, it suffices to show that $\mathcal M_{\mathcal{V}_{\rho}({\mathcal T}_{\ast}),Q_0}$ is of weak type (1,1).
Moreover, as in the proof of Proposition \ref{p-vari lp bdd imp weal type}, for
any $f\in L^1(\rd)$ and $\lambda>0$, we apply the Calder\'on--Zygmund decomposition
to $f$ at height $\lambda$. Then we obtain functions $g$ and $b$ such that
$f=g+b=g+\sum_j b_j$ and the properties $(c_1)$-$(c_5)$ hold.
Moreover, let $\widetilde Q_0:=\cup_j 25\sqrt dQ_j$, where $\supp (b_j)\subset Q_j$.
The weak type (1, 1) of $\mathcal M_{\mathcal{V}_{\rho}({\mathcal T}_{\ast}),Q_0}$
is reduced to showing that
\begin{align*}
\left|\left\{x\in\rd\setminus \widetilde Q_0:\,\, \mathcal M_{\mathcal{V}_{\rho}({\mathcal T}_{\ast}),Q_0} b(x)>\frac\lambda 2\right\}\right|
\ls \frac{\|f\|_{L^1(\rd)}}{\lambda}.
\end{align*}
Observe that for any $x\in\rd$,
$$\mathcal M_{\mathcal{V}_{\rho}({\mathcal T}_{\ast}),Q_0} b(x)
\le \widetilde{\mathcal M}_{\mathcal{V}_{\rho}({\mathcal T}_{\ast}),Q_0} b(x)
+C\cm b(x),$$
where $B_x$ is as in \eqref{e-Bx defi} and
\begin{align*}
\widetilde{\mathcal M}_{\mathcal{V}_{\rho}({\mathcal T}_{\ast}),Q_0} b(x):= \left\{
                                                                  \begin{array}{ll}
                                                                    \sup\limits_{Q\ni x,\, Q\subset Q_0} \esssup\limits_{\xi\in Q} \mathcal{V}_{\rho}\big({\mathcal T}_{\ast}(b\chi_{3Q_0\setminus B_x})\big)(\xi), & {\ \ \ \ x\in Q_0;} \\[12pt]
                                                                    0, & \hbox{\ \ \ \ otherwise.}                                                                  \end{array}
                                                                \right.
\end{align*}
Then from the weak type (1,1) of $\cm$ and the definition of $\widetilde{\mathcal M}_{\mathcal{V}_{\rho}({\mathcal T}_{\ast}),Q_0}$, it suffices to show that
\begin{align}\label{e-grand maxi truc weak (1,1)}
\left|\left\{x\in Q_0\setminus \widetilde Q_0:\,\, \widetilde{\mathcal M}_{\mathcal{V}_{\rho}({\mathcal T}_{\ast}),Q_0} b(x)>\frac\lambda 4\right\}\right|
\ls \frac{\|f\|_{L^1(\rd)}}{\lambda}.
\end{align}
We now estimate $\widetilde{\mathcal M}_{\mathcal{V}_{\rho}({\mathcal T}_{\ast}),Q_0} b(x)$ for $x\in Q_0\setminus \widetilde Q$.
By the definition of $\widetilde{\mathcal M}_{\mathcal{V}_{\rho}({\mathcal T}_{\ast}),Q_0}$, we only need to consider the case $x\in Q_0\setminus \widetilde Q$.
For $x\in Q_0\setminus \widetilde Q$, take $Q\ni x$, $\xi\in Q$ and $\{\varepsilon_i\}_i$ such that
$$\widetilde{\mathcal M}_{\mathcal{V}_{\rho}({\mathcal T}_{\ast}),Q_0} b(x)
\le 2\left[\sum_i
\bigg|\sum_j T\left(\chi_{A_{I_i}(\xi)}\chi_{3Q_0\setminus B_x}b_j\right)(\xi)\bigg|^\rho\right]^{1/\rho},$$
where $I_i:=(\varepsilon_{i+1}, \varepsilon_i]$ and
$$A_{I_i}(\xi):=\{y\in\rd:\,\, |\xi-y|\in I_i\}.$$
For fixed $Q\ni x$, $\xi\in Q$, $\{\varepsilon_i\}$ and $I_i$, consider the following
three sets of indices $j's$:
$$L_{I_i}^1(\xi):=\{j:\,\, Q_j\subset A_{I_i}(\xi)\cap (3Q_0\setminus B_x)\},$$
$$L_{I_i}^2(\xi):=\{j:\, j\not\in L_{I_i}^1(\xi),\, Q_j\cap (A_{I_i}(\xi)\cap (3Q_0\setminus B_x))\not=\emptyset,\,Q_j\cap \partial(3Q_0)\not=\emptyset\},$$
\begin{align*}
L_{I_i}^3(\xi)&:=\{j:\, j\not\in L_{I_i}^1(\xi),\, Q_j\cap (A_{I_i}(\xi)\cap (3Q_0\setminus B_x))\not=\emptyset,\\
&\quad\quad \quad Q_j\cap \partial B_x\not=\emptyset\,\,{\rm or}\,\, Q_j\cap \partial(A_{I_i}(\xi))\not=\emptyset\}.
\end{align*}
Then we have
\begin{align*}
\left[\sum_i
\bigg|\sum_j T\left(\chi_{A_{I_i}(\xi)}\chi_{3Q_0\setminus B_x}b_j\right)(\xi)\bigg|^\rho\right]^{1/\rho}&\le
\sum_{m=1}^3\left[\sum_i
\bigg|\sum_{j\in L_{I_i}^m(\xi)} T\left(\chi_{A_{I_i}(\xi)}\chi_{3Q_0\setminus B_x}b_j\right)(\xi)\bigg|^\rho\right]^{1/\rho}\\
&=:\sum_{m=1}^3 L_m(\xi).
\end{align*}
From \eqref{e-cz kernel smooth cond}, the fact that $|\xi-y_j|\ge 2 |y-y_j|$
 for any $y\in Q_j$, and $\int_\rd b_j(y) dy=0$, it follows that
\begin{align*}
L_1(\xi)&\le \sum_i
\sum_{j\in L_{I_i}^1(\xi)}\int_\rd|K(\xi, y)-K(\xi, y_j)|\chi_{A_{I_i}(\xi)}(y)\chi_{3Q_0\setminus B_x}(y)|b_j(y)|\,dy\\
&\ls \sum_j\int_\rd\omega\left(\frac{|y-y_j|}{|\xi-y_j|}\right)
\frac1{|\xi-y_j|^d}|b_j(y)|\,dy\\
&\ls \sum_j\int_\rd\omega\left(\frac{c|y-y_j|}{|x-y_j|}\right)
\frac1{|x-y_j|^d}|b_j(y)|\,dy,
\end{align*}
where $y_j$ is the center of $Q_j$, and the implicit constant is independent of
$\{\varepsilon_i\}$,
$\xi$, $Q$ and $b_j$. Then by $(c_4)$, $(c_5)$ and \eqref{e-cz kernel smooth cond}, we see that
\begin{align*}
&\left|\left\{x\in Q_0\setminus \widetilde Q_0:\,\, L_1(\xi)>\frac\lambda {16}\right\}\right|\\
&\quad\ls\frac1\lambda \sum_j\int_\rd |b_j(y)|\,dy
\int_{\rd\setminus \widetilde Q_0} \omega\left(\frac{c|y-y_j|}{|x-y_j|}\right)
\frac1{|x-y_j|^d}\,dx\ls \frac1\lambda\|f\|_{L^1(\rd)}.
\end{align*}

To estimate $L_2(\xi)$ and $L_3(\xi)$, as in the proof of Proposition \ref{p-vari lp bdd imp weal type},
we may assume that $I_i$ is a subset of a dyadic interval $J_k:=(2^k, 2^{k+1}]$ for some $k\in\zz$.
Because for any $j\in L_2(\xi)\cup L_3(\xi)$, $Q_j\cap (A_{I_i}(\xi)\cap (3Q_0\setminus B_x))\not=\emptyset,$
then there exists $z\in Q_j\cap (A_{I_i}(\xi)\cap (3Q_0\setminus B_x))$.
Since $|x-\xi|\le \sqrt dl(Q)<\frac13|x-z|$,  we see that
\begin{align}\label{e-equiva distanc Q_j}
8\sqrt dl(Q_j)\le\frac23|x-z|<|x-z|-|x-\xi|\le|z-\xi|\le 2^{k+1}.
\end{align}
Moreover, for any $y\in Q_j\cap (3Q_0\setminus B_x)$,
\begin{align}
\label{e-equiva distanc}
|\xi-y|\approx |x-y|\approx|x-z|\approx |\xi -z|,
\end{align}

Now we estimate $L_2(\xi)$.
Note that there exists $u\in Q_j\cap \partial(3Q_0)$.
Then we have
$$l(Q_0)-2^{k-2}\le l(Q_0)-\sqrt dl(Q_j) \le|u-\xi|-|z-u|\le|z-\xi|\le 2^{k+1}$$
and
$$2^k<|z-\xi|\le|z-u|+|u-\xi|\le \sqrt d l(Q_j)+|u-\xi|<2^{k-2}+3\sqrt dl(Q_0).$$
Thus, $l(Q_0)\approx 2^k$. From this, $(c_4)$ and $(c_5)$,
 we conclude that
\begin{align*}
&\left|\left\{x\in Q_0\setminus \widetilde Q_0:\,\, L_2(\xi)>\frac\lambda {16}\right\}\right|\\
&\quad\ls\frac1\lambda\int_{Q_0\setminus \widetilde Q}
\sum_i\sum_{j\in L_{I_i}^2(\xi)}\int_\rd
\frac1{|Q_0|}\chi_{A_{I_i}(\xi)}(y)\chi_{3Q_0\setminus B_x}(y)|b_j(y)|\,dy\,dx\\
&\quad\ls\frac1\lambda\sum_j\int_{Q_0}
\int_\rd\frac1{|Q_0|}|b_j(y)|\,dy\,dx\ls \frac1\lambda\|f\|_{L^1(\rd)}.
\end{align*}

To estimate $L_3(\xi)$, for each $N\in \zz$, let $\mathcal D_N(\rd)$ be the subset of $\mathcal D(\rd)$ of dyadic cubes having side-length $2^N$.
For each $k\in\zz$, let
\begin{align}\label{e-vari defn}
\mathcal S_k:=\left\{i:\,\, I_i\subseteq J_k\right\}.
\end{align}

Since $\rho>2$ as in the definition of variation in \eqref{def variation}, now we have
\begin{eqnarray*}
&&\left|\left\{x\in Q_0\setminus \widetilde Q_0:\,\,
\left(\sum_{i\in\mathbb N}\left|\sum_{j\in L^3_{I_i}(\xi)} T\left(\chi_{A_{I_i}(\xi)}\chi_{3Q_0\setminus B_x}b_j\right)(\xi)\right|^\rho\right)^{1/\rho}>\frac\lz {16}\right\}\right|\\
&&\quad\ls \frac1{\lz^2}\int_{Q_0\setminus \wz Q_0}\sum_k\sum_{i\in\mathcal S_k}
\left|\sum_{j\in \zz}\chi_{L^3_{I_i}(\xi)}(j) T\left(\chi_{A_{I_i}(\xi)}\chi_{3Q_0\setminus B_x}b_j\right)(\xi)\right|^2\,dx\\
&&\quad=\frac1{\lz^2}\int_{Q_0\setminus \wz Q_0}\sum_{k\in\zz}\sum_{i\in\mathcal S_k}\left|\sum_{N\in\zz} T\left(\sum_{Q_j\in \mathcal D_N(\rd)}\chi_{L^3_{I_i}(\xi)}(j)\chi_{A_{I_i}(\xi)}\chi_{3Q_0\setminus B_x}b_j\right)(\xi)\right|^2\,dx\\
&&\quad\le \frac1{\lz^2}\sum_{k\in\zz}\int_{Q_0\setminus \wz Q_0}\left|\sum_{N\in\zz}h_{k,\,N}(\xi)\right|^2\,dx,
\end{eqnarray*}
where for each $k$ and $N$, $\mathcal S_k$ is as in \eqref{e-vari defn} and
\begin{align}\label{e-hkN func defn-2}
h_{k,\,N}(\xi):=\left[\sum_{i\in\mathcal S_k}\left|
T\left(\sum_{Q_j\in \mathcal D_N(\rd)}\chi_{L^3_{I_i}(\xi)}(j)\chi_{A_{I_i}(\xi)}\chi_{3Q_0\setminus B_x}b_j\right)(\xi)\right|^2\right]^\frac12.
\end{align}
It remains to show that
\begin{align*}
\sum_{k\in\zz}\int_\rd\left|\sum_{N\in\zz}\chi_{Q_0\setminus \wz Q}(x)h_{k,\,N}(\xi)\right|^2\,dx\ls \lz^2\sum_{N\in\zz}\|d_N\|_{L^2(\rd)}^2,
\end{align*}
where for each $N$,
$$d_N(x):=\sum_{Q_j\in \mathcal D_N(\rd)}\chi_{Q_j}(x).$$
By the well-known almost orthogonality lemma, it is sufficient to show that, for every $k$ and $N$,
\begin{align*}
\int_\rd\left|\chi_{Q_0\setminus \wz Q_0}(x)h_{k,\,N}(\xi)\right|^2\,dx\ls \lz^22^{-|k-N|}\|d_N\|_{L^2(\rd)}^2
\approx \lz^22^{-|k-N|}\|d_N\|_{L^1(\rd)}.
\end{align*}
To this end, it remains to show that
\begin{align}\label{e-almost orthog h_{k,N}-xi}
h_{k,\,N}(\xi)^2\ls \lz^22^{-|k-N|}\frac1{2^{kd}}
\int_{\{y:\,2^{k-2}\le |x-y|<2^{k+3}\}}d_N(y)\,dy.
\end{align}
Observe that for $i\in \mathcal S_k$ and  $Q_j\in \mathcal D_N(\rd)$, if $N\ge k-2-\log\sqrt d$, then
we must have $Q_j\cap A_{I_i}(\xi)=\emptyset$; for otherwise,
for any $y\in Q_j\cap A_{I_i}(\xi)$,
$$8\sqrt d 2^N< |x-y_j|-|\xi-x|-|y_j-y|\le |\xi-y_j|-|y_j-y|\le |\xi-y|\le2^{k+1},$$
which is impossible. This via \eqref{e-hkN func defn-2} implies that $h_{k,\,N}(\xi)=0$ for $N\ge k-2-\log\sqrt d$,
and \eqref{e-almost orthog h_{k,N}-xi} holds. Thus, for $i\in\mathcal S_k$, we only need to prove
\eqref{e-almost orthog h_{k,N}-xi} for $N<k-2-\log\sqrt d$.

We first claim that
\begin{align}\label{e-Q_j measure claim}
\sum_{j\in L^3_{I_i}(\xi):\,Q_j\in \mathcal D_N(\rd)} |Q_j|\ls 2^{(d-1)k+N},
\end{align}
Indeed, by $i\in \mathcal S_k$ with $k>N+2+\log\sqrt d$ and the fact that $\{Q_j\}$ are mutually disjoint, we have
\begin{align}\label{e-Q_j measure claim-1}
\sum_{\gfz{j\in L^3_{I_i}(\xi):\,Q_j\in \mathcal D_N(\rd)}{Q_j\cap \partial A_{I_i}(\xi)\not=\emptyset}}|Q_j|\ls 2^{(d-1)k+N}.
\end{align}
Moreover, if  $Q_j\cap (A_{I_i}(\xi)\cap (3Q_0\setminus B_x))\not=\emptyset$ and $Q_j\cap \partial B_x\not=\emptyset$, we see that
 there exists $z\in Q_j\cap (A_{I_i}(\xi)\cap (3Q_0\setminus B_x))$
satisfying \eqref{e-equiva distanc Q_j} and
$u\in Q_j\cap \partial B_x$ satisfying
$$l(Q)-2^{k-2}\le l(Q)-\sqrt dl(Q_j) \le|u-\xi|-|z-u|\le|z-\xi|\le2^{k+1}$$
and
$$2^k< |z-\xi|\le|z-u|+|u-\xi|\le \sqrt d l(Q_j)+|u-\xi|\le 2^{k-2}+10dl(Q).$$
Thus,
\begin{align*}
\frac3{40}d^{-1}2^{k}\le l(Q)\le 2^{k+2}.
\end{align*}
This via $\sqrt dl(Q_j)\le 2^{k-2}$  further implies that
\begin{align}\label{e-Q_j measure claim-2}
\sum_{\gfz{j\in L^3_{I_i}(\xi):\,Q_j\in \mathcal D_N(\rd)}{Q_j\cap  \partial B_x\not=\emptyset}}|Q_j|\ls 2^{(d-1)k+N}.
\end{align}
Therefore, \eqref{e-Q_j measure claim} follows from combining \eqref{e-Q_j measure claim-1} and \eqref{e-Q_j measure claim-2}
and hence the claim holds.

For  each $k$ and $i\in \mathcal S_k$, let
$$g_N:=\sum_{j\in L^3_{I_i}(\xi):\,Q_j\in \mathcal D_N(\rd)}b_j \,\,{\rm\ \ and\ \  }\,\,P_i(\xi):=\bigcup_{j\in L^3_{I_i}(\xi):\, Q_j\in \mathcal D_N(\rd)}Q_j.$$
Then for any $y\in P_i(\xi)$, there exists $j\in L^3_{I_i}(\xi)$, $Q_j\in \mathcal D_N(\rd)$ containing $y$. Moreover, assume that $z_j\in Q_j\cap A_{I_i}(\xi)\cap (3Q_0\setminus B_x))$. By the fact $i\in \mathcal S_k$, we see
that
$2^k<|z_j-\xi|\le2^{k+1}$. This further implies that
\begin{align}\label{e-anlus-xi-1}
|y-x|\ge|y-\xi|-|\xi-x|\ge |z_j-\xi|-|z_j-y|-|\xi-x|>2^{k-1}-\sqrt d 2^N\ge 2^{k-2}
\end{align}
and
\begin{align}\label{e-anlus-xi-2}
|y-x|\le|y-\xi|+|\xi-x|\le|y-z_j|+|z_j-\xi|+|\xi-x|< 2^{k+3}.
\end{align}
Therefore, we have that $P_i(\xi)\subset \{y: 2^{k-2}<|x-y|\le 2^{k+3}\}$ and hence,
$$h_{k,\,N}(\xi)^2\le \sum_{i\in \mathcal S_k}\left[\int_{P_i(\xi)}
|K(\xi,y)||g_N(y)|\,dy\right]^2\ls \frac1{2^{2kd}}\sum_{i\in \mathcal S_k}\left[\int_{P_i(\xi)}
|g_N(y)|\,dy\right]^2.$$
For each $i\in \mathcal S_k$, from $(c_4)$ and \eqref{e-Q_j measure claim}, we deduce that
\begin{align*}
\int_{P_i(\xi)}|g_N(y)|\,dy&\ls \sum_{j\in L^3_{I_i}(\xi):\,Q_j\in \mathcal D_N(\rd)} \int_{P_i(\xi)}
|b_j(y)|\,dy
\ls\sum_{j\in L^3_{I_i}(\xi):\,Q_j\in \mathcal D_N(\rd)} \lz |Q_j|\ls \lz 2^{(d-1)k+N}.
\end{align*}

Note that $\{I_i\}_{i\in\mathcal S_k}$ forms a partition of $(2^k, 2^{k+1}]$.
By using \eqref{e-anlus-xi-1}, \eqref{e-anlus-xi-2} and $(c_4)$, we now conclude that
\begin{align*}
h_{k,\,N}(\xi)^2&\ls \frac\lz{2^{kd}}2^{N-k}\sum_{i\in \mathcal S_k}\int_{P_i(\xi)}
|g_N(y)|\,dy\\
&\le\frac\lz{2^{kd}}2^{N-k}\sum_{i\in \mathcal S_k}\int_{P_i(\xi)}\sum_{j\in L^3_{I_i}(\xi):\,Q_j\in \mathcal D_N(\rd)}\left|b_j(y)\right|\,dy\\
&\ls\frac\lz{2^{kd}}2^{N-k}\sum_{j\in L^3_{I_i}(\xi)\,{\rm for\,some\,}i\in \mathcal S_k:\,Q_j\in \mathcal D_N(\rd)}\int_{\rd}\left|b_j(y)\right|\,dy\\
&\ls \frac{\lz^2}{2^{kd}}2^{N-k}\sum_{j\in L^3_{I_i}(\xi)\,{\rm for\,some\,}i\in \mathcal S_k:\,Q_j\in \mathcal D_N(\rd)}|Q_j|\\
&\ls \frac{\lz^2}{2^{kd}}2^{N-k}\int_{\{y:\,2^{k-2}<|x-y|\le2^{k+3}\}}d_N(y)\,dy.
\end{align*}
This shows \eqref{e-almost orthog h_{k,N}-xi} and hence finishes the proof of
Proposition \ref{p-grand maxi truncat bdd}.
\end{proof}

\begin{rem}\label{r-grand maxi traunc bdd}
Define the grand maximal truncated operator $\mathcal M_{\mathcal{V}_{\rho}({\mathcal T}_{\ast})}$ by
\begin{align*}
\mathcal M_{\mathcal{V}_{\rho}({\mathcal T}_{\ast})}f(x) := \sup_{Q\ni x} \esssup_{\xi\in Q} \mathcal{V}_{\rho}\big({\mathcal T}_{\ast}(f\chi_{\rd\setminus3Q})\big)(\xi) ,
\end{align*}
where the supremum is taken over all cubes $Q\subset \rd$ containing $x$.
From the proof of Proposition \ref{p-grand maxi truncat bdd},
we see that the conclusion of Proposition \ref{p-grand maxi truncat bdd}
also holds if we replace $\mathcal M_{\mathcal{V}_{\rho}({\mathcal T}_{\ast}),Q_0}$ with  $\mathcal M_{\mathcal{V}_{\rho}({\mathcal T}_{\ast})}$.
\end{rem}

Now we are ready to provide the proof for our first main result.
\begin{proof}[Proof of Theorem \ref{p-sparse domi}]
We first prove that there exist pairwise disjoint cubes  $\{P_j\}\subset \mathcal D(Q_0)$ such that $\sum_j |P_j|\le \varepsilon |Q_0|$ and
for a.\, e. $x\in Q_0$,
\begin{align}\label{e-sparse domin recur}
\mathcal{V}_{\rho}({\mathcal T}_{\ast}(f\chi_{3Q_0}))(x)\le C|f|_{3Q_0}
+\sum_{j}\mathcal{V}_{\rho}({\mathcal T}_{\ast}(f\chi_{3P_j}))(x)\chi_{P_j}(x).
\end{align}
By Proposition \ref{p-grand maxi truncat bdd}, we see that $\mathcal M_{\mathcal{V}_{\rho}({\mathcal T}_{\ast}),Q_0}$ is bounded from $\lo$ to $L^{1,\,\fz}(\rd)$. Now let
\begin{eqnarray*}
E:=&&\left\{x\in Q_0: \,\,|f(x)|>\alpha_d |f|_{3Q_0}\right\}\\
&&\bigcup
\left\{x\in Q_0:\,\,\mathcal M_{\mathcal{V}_{\rho}({\mathcal T}_{\ast}),Q_0}f(x)
> \alpha_d \|\mathcal M_{\mathcal{V}_{\rho}({\mathcal T}_{\ast}),Q_0}\|_{\lo\to L^{1,\,\fz}(\rd)}|f|_{3Q_0}\right\},
\end{eqnarray*}
where $\alpha_d:=2^{d+2}3^d\varepsilon^{-1}$. Then by the weak type (1, 1) of
$\mathcal M_{\mathcal{V}_{\rho}({\mathcal T}_{\ast}),Q_0}$ and the fact that $\supp(f)\subset Q_0$, we see that
\begin{align}\label{e-meas of E}
|E|\le \frac{\varepsilon}{2^{d+1}}|Q_0|.
\end{align}
Now we apply the Calder\'on--Zygmund decomposition to the function
$\chi_E$ at height $\lz:=\frac1{2^{d+1}}$. Then there exists a sequence
$\{P_j\}_j\subset \mathcal D(Q_0)$ of disjoint cubes such that
 \begin{align}\label{e-cz decom}
 \frac1{2^{d+1}}=\lz\le \frac1{|P_j|}\int_{P_j}\chi_{E}(x)\,dx\le 2^d \lz=\frac12
 \end{align}
and for a.\, e. $x\in Q_0\setminus \cup_j P_j$,  $\chi_E(x)\le \lz$.
These facts together with \eqref{e-meas of E} further imply that
$|E\setminus \cup_j P_j|=0$ and
\begin{align*}
\sum_{j}|P_j|\le 2^{d+1}\sum_{j}|P_j\cap E|\le 2^{d+1}|E|\le \varepsilon|Q_0|.
\end{align*}

Since $\supp(f)\subset Q_0$, we write
\begin{eqnarray*}
\mathcal{V}_{\rho}({\mathcal T}_{\ast}f)(x)\,\chi_{Q_0}(x)&=&
\mathcal{V}_{\rho}({\mathcal T}_{\ast}(f\chi_{3Q_0}))(x)\chi_{Q_0\setminus \cup_jP_j}(x)+ \sum_j\mathcal{V}_{\rho}({\mathcal T}_{\ast}(f\chi_{3Q_0}))(x)\chi_{P_j}(x)\\
&\le&\mathcal{V}_{\rho}({\mathcal T}_{\ast}(f\chi_{3Q_0}))(x)\chi_{Q_0\setminus \cup_jP_j}(x)+ \sum_j\mathcal{V}_{\rho}({\mathcal T}_{\ast}(f\chi_{3Q_0\setminus 3P_j}))(x)\chi_{P_j}(x)\\
&\quad& +\sum_j\mathcal{V}_{\rho}({\mathcal T}_{\ast}(f\chi_{3P_j}))(x)\chi_{P_j}(x).
\end{eqnarray*}
Since $|E\setminus \cup_j P_j|=0$, by Lemma \ref{l-vari pointwise domin} and the definition of $E$, we see that for  a.\, e. $x\in Q_0$,
\begin{align}\label{e-varia pointwise domin}
\mathcal{V}_{\rho}({\mathcal T}_{\ast}(f\chi_{3Q_0}))(x)\chi_{Q_0\setminus \cup_jP_j}(x)&&\ls \chi_{Q_0\setminus \cup_jP_j}(x)\left[|f(x)|+\mathcal M_{\mathcal{V}_{\rho}({\mathcal T}_{\ast}),Q_0}f(x)\right]\ls \varepsilon^{-1} |f|_{3Q_0}.
\end{align}
Moreover, from \eqref{e-cz decom} we deduce that for any $j$, $|P_j\cap E^c|\ge \frac12|P_j|$. This in turn implies that  there exists $x_j\in (P_j\cap E^c)$ such that
$$\mathcal M_{\mathcal{V}_{\rho}({\mathcal T}_{\ast}),Q_0}f(x_j)\ls\varepsilon^{-1} \|\mathcal M_{\mathcal{V}_{\rho}({\mathcal T}_{\ast})}\|_{\lo\to L^{1,\,\fz}(\rd)}|f|_{3Q_0}.$$
It then follows that for a.\, e. $x\in Q_0$,
\begin{align}\label{e-varia pointwise domin-2}
\mathcal{V}_{\rho}\left({\mathcal T}_{\ast}\left(f\chi_{3Q_0\setminus 3P_j}\right)\right)(x)\chi_{P_j}(x)\le \mathcal M_{\mathcal{V}_{\rho}({\mathcal T}_{\ast}),Q_0}f(x_j)\ls \varepsilon^{-1} |f|_{3Q_0}.
\end{align}
Combining this fact with \eqref{e-varia pointwise domin} and the fact that $\{P_j\}_j$ is mutually disjoint, we obtain \eqref{e-sparse domin recur}.

If $\overline {\mathcal F}:=\{R_j\}$ is a collection of disjoint dyadic subcubes of $Q_0$
that covers $\mathcal F=\{P_j\}$, then by the sublinearity of $\mathcal{V}_{\rho}({\mathcal T}_{\ast})$, we write
\begin{eqnarray*}
\mathcal{V}_{\rho}\left({\mathcal T}_{\ast}f\right)(x)
&\le&\mathcal{V}_{\rho}\left({\mathcal T}_{\ast}\left(f\chi_{3Q_0}\right)\right)(x)\chi_{Q_0\setminus \cup_jR_j}(x)+ \sum_j\mathcal{V}_{\rho}\left({\mathcal T}_{\ast}\left(f\chi_{3Q_0\setminus 3R_j}\right)\right)(x)\chi_{R_j}(x)\\
&\quad& +\sum_j\mathcal{V}_{\rho}\left({\mathcal T}_{\ast}\left(f\chi_{3R_j}\right)\right)(x)\chi_{R_j}(x).
\end{eqnarray*}

Since $\cup_j P_j\subset \cup_j R_j$, by the definition
of $E$, \eqref{e-varia pointwise domin} and \eqref{e-varia pointwise domin-2} still
hold with $P_j$ replaced with $R_j$. This shows (iii) and finishes the proof of Theorem \ref{p-sparse domi}.
\end{proof}

\section{Proof of Theorem \ref{t-lp bdd vari matrix weigh}}\label{s3}

In this section, by using Theorem \ref{p-sparse domi}, we first obtain a vector-valued version of domination of $\mathcal{V}_{\rho}({\mathcal T_n}_{\,,\,\ast}\vec f)(x)$ by convex body valued sparse operators in \cite{NazarovPTV17AdvMath}, then we further prove Theorem \ref{t-lp bdd vari matrix weigh}.
To begin with, we recall the so-called sparse collection of cubes in \cite{LernerNazarov15}, see also \cite{NazarovPTV17AdvMath}.

\begin{defn} \label{d-sparse}
Given $\eta\in(0, \infty)$, a collection $\mathcal S$ of cubes
(not necessarily dyadic) is said to be $\eta$-sparse provided that for every $Q\in\mathcal S$, there is a measurable subset $E_Q \subset Q$ such that
$|E_Q| \geq \eta |Q|$ and the sets $\{E_Q\}_{Q\in\mathcal S}$ are pairwise disjoint.
\end{defn}

Next we recall the convex body average of a vector $\vec f$ in \cite{NazarovPTV17AdvMath}. For $\vec f\in L^1(Q,\mathbb R^n)$, the convex body average $\llangle \vec f\rrangle_Q$ is defined as
\begin{align}\label{e-convex body aver defn}
\left\llangle \vec f\right\rrangle_Q:= \Big\{ \left\langle \varphi \vec f \right\rangle_Q\Big| \ \varphi: Q\to\mathbb R,\ \|\varphi\|_\lin\leq 1\Big\}.
\end{align}
Then $\llangle \vec f\rrangle_Q$ is a symmetric, convex, compact set in $\mathbb R^n$.  For a sparse family $\mathcal G$ of cubes,
in \cite{NazarovPTV17AdvMath} Nazarov et al introduced a sparse operator $\mathbb L:=\mathbb L_{\mathcal G}$ by
\begin{align*}
\mathbb L_{\mathcal G}(f):= \sum_{Q\in \mathcal G}  \left\llangle \vec f\right\rrangle_Q \chi_Q,
\end{align*}
where the sum is understood as Minkowsky sum. Moreover, from Lemma 2.5 in \cite{NazarovPTV17AdvMath}
we get that for a sparse family $\mathcal G$ and a compactly supported vector-valued function $\vec f\in L^1(\rd, \mathbb R^n)$, the set $\mathbb L_{\mathcal G}$ is a bounded convex symmetric subset of $\mathbb R^n$.

We also recall the John ellipsoids in \cite{NazarovPTV17AdvMath}.
An ellipsoid in $\mathbb R^n$ is an image of the closed unit ball $B$ in $\mathbb R^n$ under a non-singular affine transformation. Recall, that for a convex body (i.\,e. a compact convex set with non-empty interior) $K$ in $\mathbb R^n$, its John ellipsoid is an ellipsoid of maximal volume contained in $K$. It is known that the John ellipsoid is unique, and that if $K$ is symmetric, then its John ellipsoid $\mathcal E := \mathcal E_K$ is centered at $0$ and
$$  \mathcal E\subset K\subset \sqrt n\mathcal E;  $$
see \cite{NazarovPTV17AdvMath} or \cite{Howard97}.

We now recall the John ellipsoids for the set $\llangle \vec f\rrangle_Q$ as follows.

\begin{lem}[Lemma 2.6, \cite{NazarovPTV17AdvMath}]\label{l-convex body}
Let $\vec f \in L^1(Q,\mathbb R^n)$ be non-trivial (i.\,e. $\vec f(x)\not= 0$ on a set of positive measure). Then there exists a unique subspace $E \subset \mathbb R^n$ containing $\llangle \vec f\rrangle_Q$ such that $\llangle \vec f\rrangle_Q$ has non-empty interior in $E$.
\end{lem}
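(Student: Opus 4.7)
My plan is to take $E$ to be the linear span of $\llangle \vec f\rrangle_Q$ in $\mathbb{R}^n$ and then verify that it is the only subspace in which $\llangle \vec f\rrangle_Q$ has non-empty interior. The whole argument will be a finite-dimensional convex-geometry exercise, with no analytic input beyond the definition in \eqref{e-convex body aver defn}.

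First I would observe that $\llangle \vec f\rrangle_Q$ is a convex subset of $\mathbb{R}^n$ that contains $0$ (take $\varphi\equiv 0$) and is symmetric about $0$ (replace $\varphi$ by $-\varphi$). Consequently the affine hull of $\llangle \vec f\rrangle_Q$ coincides with its linear span $E$. Non-triviality of $\vec f$ makes $\llangle \vec f\rrangle_Q$ a non-empty convex set (it contains $\pm\langle \vec f\rangle_Q$), and the classical fact that every non-empty convex subset of a finite-dimensional Euclidean space has non-empty relative interior then provides an interior point of $\llangle \vec f\rrangle_Q$ with respect to $E$, giving existence.

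For uniqueness, I would suppose $E'$ is another subspace containing $\llangle \vec f\rrangle_Q$ in which the set has non-empty interior. Picking an $E'$-interior point $x_0$ of $\llangle \vec f\rrangle_Q$, some $E'$-ball around $x_0$ must lie inside $\llangle \vec f\rrangle_Q \subset E$; translating by $-x_0$, which is legitimate because $x_0 \in E$ and $E$ is a linear subspace, yields a neighborhood of $0$ in $E'$ contained in $E$, forcing $E' \subset E$. Conversely $E \subset E'$ holds by the defining property of $E$ as the linear span of $\llangle \vec f\rrangle_Q$, so $E = E'$.

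There is no substantial obstacle here; the only points requiring care are to use the symmetry of $\llangle \vec f\rrangle_Q$ about the origin to identify affine hull with linear span, and to invoke the relative-interior theorem rather than confuse it with interior in the ambient $\mathbb{R}^n$ (which would fail as soon as $E$ is a proper subspace).
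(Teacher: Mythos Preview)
Your argument is correct. The paper does not give its own proof of this lemma; it simply quotes it as Lemma~2.6 of \cite{NazarovPTV17AdvMath}, so there is nothing in the paper to compare against. Your approach---take $E$ to be the linear span of $\llangle \vec f\rrangle_Q$, use symmetry about $0$ to identify the affine hull with this span, invoke the standard relative-interior theorem for convex sets in finite dimensions, and then argue uniqueness by the ball-translation trick---is exactly the natural convex-geometry argument and is complete as written. One tiny remark: in the uniqueness step you should note explicitly that $x_0\in E'$ (which follows since $x_0\in\llangle \vec f\rrangle_Q\subset E'$) so that the translated ball $B_{E'}(x_0,r)-x_0$ really is an $E'$-ball about the origin; you implicitly use this but only state $x_0\in E$.
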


So, for a set $\llangle \vec f\rrangle_Q$ as in \eqref{e-convex body aver defn}, its John ellipsoid is defined as John ellipsoid in the subspace $E$ as in Lemma \ref{l-convex body}.

Under the assumption of Theorem \ref{t-lp bdd vari matrix weigh} that $\mathcal{V}_{\rho}({\mathcal T_n}_{\,,\,\ast})$ is bounded on $L^q(\mathbb R^d,\mathbb R^n)$ for some $q\in(1,\infty)$, we see that $\mathcal{V}_{\rho}({\mathcal T}_{\ast})$ is bounded on $L^q(\mathbb R^d, \mathbb R)$. Then, based on Theorem \ref{p-sparse domi} and the convex body sparse operator above, we have the
following result for $\mathcal{V}_{\rho}({\mathcal T_n}_{\,,\,\ast})$.

\begin{prop}\label{p-sparse domi 2}
Let $T$ be a Calder\'on--Zygmund operator as in Theorem \ref{t-lp bdd vari matrix weigh} and $T_n$ be as in \eqref{Tn}.
Then for any fixed cube $Q_0$,  $0<\delta<1$ and vector-valued functions $\vec f \in L^1(\mathbb R^d, \mathbb R^n)$
supported in $Q_0$, there exists a family $\mathcal G$ of disjoint dyadic subcubes of $Q_0$ such that
\begin{align} \label{sparse family}
\sum_{Q\in\mathcal G} |Q|\leq \delta |Q_0|
\end{align}
and
\begin{align}  \label{VT first generation}
\mathcal{V}_{\rho}\left({\mathcal T_n}_{\,,\,\ast}\vec f\ \right)(x) \in C\left\llangle \vec f\right\rrangle_{3Q_0} + \sum_{Q\in \mathcal G} \chi_Q(x) \mathcal{V}_{\rho}\left({\mathcal T_n}_{\,,\,\ast}\left(\vec f \chi_{3Q}\right)\right)(x)\quad {\rm\ a.\,e.\ on\ } Q_0,
\end{align}
where the constant $C$ depends only on $T$, $\delta$ and on the dimensions $n$ and $d$.
\end{prop}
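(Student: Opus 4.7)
The plan is to adapt the proof of Theorem~\ref{p-sparse domi} to the vector-valued setting by first producing a single sparse family $\mathcal G$ that works simultaneously for all scalar components $f_1,\dots,f_n$ of $\vec f$, and then upgrading the resulting componentwise inequalities to the convex body containment \eqref{VT first generation} via Lemma~\ref{l-convex body} and the John ellipsoid. Since $T_n=T\otimes Id_n$ acts diagonally in the standard basis, $\mathcal{V}_\rho(\mathcal{T}_{n,*}\vec f)$ is by definition the column vector of scalar variations $\mathcal{V}_\rho(\mathcal{T}_\ast f_i)$, so Theorem~\ref{p-sparse domi} applies componentwise.

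Concretely, I would first apply Theorem~\ref{p-sparse domi} to each $f_i$ separately with parameter $\delta/n$, obtaining disjoint families $\mathcal F_i\subset \mathcal D(Q_0)$ with $\sum_{Q\in\mathcal F_i}|Q|\le(\delta/n)|Q_0|$, and then take $\mathcal G$ to be the collection of maximal dyadic cubes in $\bigcup_{i=1}^n\mathcal F_i$. Maximality makes $\mathcal G$ automatically disjoint, and
\begin{align*}
\sum_{Q\in\mathcal G}|Q|=\Bigl|\bigcup_{Q\in\mathcal G}Q\Bigr|\le \sum_{i=1}^n\sum_{Q\in\mathcal F_i}|Q|\le \delta|Q_0|,
\end{align*}
so \eqref{sparse family} holds. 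Because $\mathcal G$ covers each $\mathcal F_i$, part~(iii) of Theorem~\ref{p-sparse domi} applies simultaneously with this common family: for a.\,e.\ $x\in Q_0$ and every $i$,
\begin{align*}
|D_i(x)|:=\Bigl|\mathcal{V}_\rho(\mathcal{T}_\ast f_i)(x)-\sum_{Q\in\mathcal G}\chi_Q(x)\mathcal{V}_\rho(\mathcal{T}_\ast(f_i\chi_{3Q}))(x)\Bigr|\le C|f_i|_{3Q_0}=C\,h_K(\vec e_i),
\end{align*}
where $K:=\llangle\vec f\rrangle_{3Q_0}$, $\vec e_i$ is the $i$-th standard basis vector, and $h_K(\vec u)=|\vec u\cdot \vec f|_{3Q_0}$ is the support function of $K$.

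To promote these componentwise estimates to membership of the excess vector $\vec D(x):=(D_1(x),\dots,D_n(x))^{T}$ in $C'K$, I would apply Lemma~\ref{l-convex body} to locate the subspace $E\subset\mathbb R^n$ spanned by $K$ and invoke the John ellipsoid $\mathcal E$ of $K$ in $E$, satisfying $\mathcal E\subset K\subset \sqrt n\,\mathcal E$. Working in an orthonormal basis $\{\vec u_j\}_{j=1}^k$ of $E$ that diagonalizes $\mathcal E$ (with semi-axes $a_j\le h_K(\vec u_j)$), the componentwise bounds $|D_i(x)|\le C h_K(\vec e_i)$ together with the inclusion $K\supset\mathcal E$ should yield, at the cost of a dimensional factor, $\vec D(x)\in C\sqrt n\,K$; this is the line of argument borrowed from \cite{Cruz-UribeIM18IEOT}.

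The main obstacle is precisely this last geometric upgrade. Because $\mathcal{V}_\rho$ is merely sublinear, $\vec u\cdot\mathcal{V}_\rho(\mathcal{T}_{n,*}\vec f)\ne \mathcal{V}_\rho(\mathcal{T}_\ast(\vec u\cdot\vec f))$ in general, so one cannot directly re-invoke Theorem~\ref{p-sparse domi} in the adapted basis $\{\vec u_j\}$ to produce sharp support-function bounds on $\vec D(x)$ in every direction. Instead one must exploit the diagonal form of $T_n$ together with the sign-invariance $\mathcal{V}_\rho(\mathcal{T}_\ast(-f))=\mathcal{V}_\rho(\mathcal{T}_\ast f)$ to reduce arbitrary-direction estimates to the componentwise ones already obtained, absorbing the resulting dimensional loss into the constant $C$ allowed in \eqref{VT first generation}.
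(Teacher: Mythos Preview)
Your proposal follows the same skeleton as the paper's proof---apply Theorem~\ref{p-sparse domi} componentwise with parameter $\delta/n$, take the maximal cubes of the union of the resulting families to get a common $\mathcal G$, and invoke part~(iii) to obtain simultaneous scalar bounds. The divergence is in the choice of basis. You decompose $\vec f$ in the \emph{standard} basis and then try to upgrade the bounds $|D_i(x)|\le C\,h_K(\vec e_i)$ to $\vec D(x)\in C'K$. That geometric step genuinely fails: if $K$ is a thin symmetric body nearly aligned with a diagonal direction, the coordinate box $\{\vec v:|v_i|\le h_K(\vec e_i)\}$ is not contained in any dimensional dilate of $K$. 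Your final paragraph concedes this obstacle, but the appeal to sign-invariance and the ``diagonal form of $T_n$'' is not a proof and does not recover the needed directional control.

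The paper's move is to choose the basis \emph{before} invoking Theorem~\ref{p-sparse domi}. It lets $\vec e_1,\dots,\vec e_n$ be the principal axes of the John ellipsoid $\mathcal E$ of $\llangle\vec f\rrangle_{3Q_0}$ (with semi-axes $\alpha_k$), sets $f_k:=\langle\vec f,\vec e_k\rangle$, and uses $\llangle\vec f\rrangle_{3Q_0}\subset\sqrt n\,\mathcal E$ to get $|f_k|_{3Q_0}\le\sqrt n\,\alpha_k$. Applying Theorem~\ref{p-sparse domi} to these adapted $f_k$ and taking maximal cubes exactly as you do, the scalar bounds read $|D_k(x)|\le Cn\sqrt n\,\alpha_k$, so the difference vector lies in $Cn\sqrt n\,P$ with $P=\{\sum_k x_k\alpha_k\vec e_k:|x_k|\le1\}$. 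Because $P$ is \emph{aligned} with $\mathcal E$, the inclusion $P\subset\sqrt n\,\mathcal E\subset\sqrt n\,\llangle\vec f\rrangle_{3Q_0}$ is automatic, and \eqref{VT first generation} follows with constant $Cn^2$. Your sublinearity observation is correct but does not obstruct this route: the paper never tries to relate $\vec u\cdot\mathcal{V}_\rho(\mathcal{T}_{n,*}\vec f)$ to $\mathcal{V}_\rho(\mathcal{T}_*(\vec u\cdot\vec f))$; it simply reads the componentwise definition of $\mathcal{V}_\rho(\mathcal{T}_{n,*})$ in the basis $\{\vec e_k\}$ from the outset, so Theorem~\ref{p-sparse domi} applies directly to each adapted component and no after-the-fact change of basis is required. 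The missing ingredient in your argument is precisely this: pick the John-ellipsoid basis first and run the entire scalar decomposition there.
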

\begin{proof}
Consider the representation of the John ellipsoid $\mathcal E$ of $\llangle \vec f\rrangle_{3Q_0}$
in principal axes, i.\,e. let $\vec e_1, \vec e_2,\ldots, \vec e_n$ be an orthonormal basis in $\mathbb R^n$ and
$\alpha_k \in [0, \infty)$ such that
\begin{align} \label{JE}
\mathcal E := \bigg\{ \sum_{k=1}^n x_k\alpha_k \vec e_k:\  x_k\in\mathbb R, \sum_{k=1}^n  x_k^2\leq1 \bigg\}.
\end{align}
Let $f_k(x) := \langle \vec f(x), \vec e_k \rangle_{\mathbb R^n}$. Since $\llangle \vec f\rrangle_{3Q_0} \subset \sqrt n \mathcal E$,
one can conclude that
$ |f_k|_{3Q_0}\leq \sqrt n\alpha_k $ by considering
$ \langle \varphi_kf_k \rangle_{3Q_0} $ with $\varphi_k := {\rm sgn} f_k$.

Applying Theorem \ref{p-sparse domi} with $\varepsilon := \delta n^{-1}$ to $T$ and to each $f_k$, we will get for each $f_k$ a collection $\mathcal G_k$ of dyadic subcubes of $Q_0$ such that a.\,e. $x$ on $Q_0$,
$$ \mathcal{V}_{\rho}({\mathcal T}_{\ast}f_k)(x)\leq C n\sqrt n \alpha_k + \sum_{Q\in\mathcal G_k} \chi_Q(x)\mathcal{V}_{\rho}({\mathcal T}_{\ast}(f_k\chi_{3Q}))(x), $$
where we used the estimate
$ |f_k|_{3Q_0}\leq \sqrt n \alpha_k$.
Let $\mathcal G$ be the collection of maximal cubes in the collection
$\cup_{k=1}^n\mathcal G_k$.
Since $\mathcal G$ covers any of $\mathcal G_k$, by using Theorem \ref{p-sparse domi} (iii), we have that
for a.\,e. $x\in Q_0$,
$$ \mathcal{V}_{\rho}({\mathcal T}_{\ast}f_k)(x)\leq Cn\sqrt n \alpha_k  + \sum_{Q\in\mathcal G} \chi_Q(x)\mathcal{V}_{\rho}({\mathcal T}_{\ast}(f_k\chi_{3Q}))(x). $$
Then clearly for a.\,e. $x\in Q_0$,
we have that
$$ \mathcal{V}_{\rho}({\mathcal T_n}_{\,,\, \ast}\vec f)(x) \in  C\sqrt n P + \sum_{Q\in\mathcal G} \chi_Q(x)\mathcal{V}_{\rho}\left({\mathcal T_n}_{\,,\,\ast}\left(\vec f\chi_{3Q}\right)\right)(x), $$
where $P$ is the box
$$ P:=\bigg\{ \sum_{k=1}^n x_k\alpha_k\vec e_k:\ x_k\in[-1,1] \bigg\}. $$

Since $P$ is contained in  $\sqrt n \mathcal E$,  where $\mathcal E$ is the John ellipsoid as defined in \eqref{JE},
we obtain that for a.\,e. $x\in Q_0$,
\begin{align*}
 \mathcal{V}_{\rho}({\mathcal T_n}_{\,,\, \ast}\vec f)(x) &\in  Cn^2 \mathcal E + \sum_{Q\in\mathcal G} \chi_Q(x)\mathcal{V}_{\rho}\left({\mathcal T_n}_{\,,\,\ast}\left(\vec f\chi_{3Q}\right)\right)(x) \\
 &
\subset   Cn^2 \left\llangle\vec f\right\rrangle_{3Q_0} + \sum_{Q\in\mathcal G} \chi_Q(x)\mathcal{V}_{\rho}\left({\mathcal T_n}_{\,,\,\ast}\left(\vec f\chi_{3Q}\right)\right)(x).
\end{align*}
In the end, by noting that
$$ \sum_{Q\in\mathcal G} |Q|\leq  \sum_{k=1}^n\sum_{Q\in\mathcal G_k} |Q| \leq n\cdot \delta n^{-1}|Q_0|=\delta |Q_0|, $$
we get that the proof of
Proposition \ref{p-sparse domi 2} is complete.
\end{proof}

\begin{prop}\label{p-sparse domi 3}
Let $T$ be a Calder\'on--Zygmund operator as in Theorem \ref{t-lp bdd vari matrix weigh} and $T_n$ be as in \eqref{Tn}.
Then there exists an $\eta$-sparse family $\mathcal F$ of cubes for some $\eta\in(0, 1)$, such that for every compactly supported
vector-valued function $\vec f\in L^1(\mathbb R^d,\mathbb R^n)$,
\begin{align}\label{T domi}
\mathcal{V}_{\rho}\left({\mathcal T_n}_{\,,\,\ast}\vec f\ \right)(x)\in C\mathbb L_{\mathcal F}(x),
\end{align}
where the constant $C$ depends only on the operator $T$ and dimensions $n$ and $d$.
\end{prop}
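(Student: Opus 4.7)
The strategy is to iterate Proposition~\ref{p-sparse domi 2} to produce a tree of cubes (depending on $\vec f$) whose $3$-dilates constitute the sparse family $\mathcal F$, while the sparseness constant $\eta$ and the multiplicative constant $C$ remain independent of $\vec f$. Fix a compactly supported $\vec f\in L^1(\mathbb R^d,\mathbb R^n)$ and a dyadic cube $Q^{(0)}$ with $\operatorname{supp}\vec f\subset Q^{(0)}$; fix $\delta=1/2$. Starting from $\mathcal T_0:=\{Q^{(0)}\}$, I would recursively define $\mathcal T_{k+1}:=\bigsqcup_{Q\in\mathcal T_k}\mathcal G(Q)$, where $\mathcal G(Q)\subset\mathcal D(Q)$ is the disjoint family extracted by applying Proposition~\ref{p-sparse domi 2} to the pair $(Q,\vec f\chi_{3Q})$; set $\mathcal T:=\bigsqcup_k\mathcal T_k$ and $\mathcal F:=\{3Q:Q\in\mathcal T\}$. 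The one-step inclusion then reads
\begin{align*}
\mathcal V_\rho\bigl({\mathcal T_n}_{\,,\,\ast}(\vec f\chi_{3Q})\bigr)(x)\in C\left\llangle\vec f\right\rrangle_{3Q}+\sum_{Q'\in\mathcal G(Q)}\chi_{Q'}(x)\,\mathcal V_\rho\bigl({\mathcal T_n}_{\,,\,\ast}(\vec f\chi_{3Q'})\bigr)(x)
\end{align*}
for a.e.\ $x\in Q$, with $\sum_{Q'\in\mathcal G(Q)}|Q'|\leq\delta|Q|$.

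For sparseness, the assignment $3Q\mapsto E_Q:=Q\setminus\bigcup_{Q'\in\mathcal G(Q)}Q'$ satisfies $E_Q\subset Q\subset 3Q$ and $|E_Q|\geq(1-\delta)|Q|=2^{-1}3^{-d}|3Q|$. Disjointness of $\{E_Q\}_{Q\in\mathcal T}$ follows from $\mathcal G(Q)\subset\mathcal D(Q)$: any strict descendant $Q'$ of $Q$ in $\mathcal T$ lies inside some cube of $\mathcal G(Q)$ by iterated inclusion, so $Q'\cap E_Q=\varnothing$; and two cubes of $\mathcal T$ neither of which is an ancestor of the other are themselves disjoint, because at their least common ancestor they lie in distinct (hence disjoint) elements of the corresponding $\mathcal G$. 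Thus $\mathcal F$ is $\eta$-sparse with $\eta=2^{-1}3^{-d}$.

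To verify the pointwise inclusion, I would unfold the recursion. Using $\chi_Q\chi_{Q'}=\chi_{Q'}$ whenever $Q'\in\mathcal D(Q)$, after $k$ substitutions one obtains
\begin{align*}
\mathcal V_\rho({\mathcal T_n}_{\,,\,\ast}\vec f)(x)\in C\sum_{j<k}\sum_{Q\in\mathcal T_j}\chi_Q(x)\left\llangle\vec f\right\rrangle_{3Q}+\sum_{Q\in\mathcal T_k}\chi_Q(x)\,\mathcal V_\rho\bigl({\mathcal T_n}_{\,,\,\ast}(\vec f\chi_{3Q})\bigr)(x).
\end{align*}
Since $\bigl|\bigcup_{Q\in\mathcal T_k}Q\bigr|\leq\delta^k|Q^{(0)}|\to 0$, the residual term vanishes a.e.\ as $k\to\infty$; combined with $\chi_Q(x)\llangle\vec f\rrangle_{3Q}\subset\chi_{3Q}(x)\llangle\vec f\rrangle_{3Q}$ (valid because $Q\subset 3Q$ and $0\in\llangle\vec f\rrangle_{3Q}$, which is convex and symmetric), this yields $\mathcal V_\rho({\mathcal T_n}_{\,,\,\ast}\vec f)(x)\in C\,\mathbb L_{\mathcal F}(x)$. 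The main technical obstacle is justifying that Proposition~\ref{p-sparse domi 2} really applies in the recursive step to $\vec f\chi_{3Q}$, which is supported in $3Q$ rather than $Q$: this relies on tracing the proof of Proposition~\ref{p-sparse domi 2} back, via Proposition~\ref{p-sparse domi} and Lemma~\ref{l-vari pointwise domin}, to a bound on $\mathcal V_\rho(\mathcal T_\ast(f\chi_{3Q_0}))$ (not on $\mathcal V_\rho(\mathcal T_\ast f)$) that does not require $\operatorname{supp}f\subset Q_0$. A minor additional point is that if $\operatorname{supp}\vec f$ is not contained in a single dyadic cube, one first partitions it among a bounded number of disjoint dyadic cubes of comparable size and applies the above construction to each separately, combining the resulting sparse families.
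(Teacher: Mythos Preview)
Your iteration on the initial cube is essentially the paper's argument, and your direct verification of $\eta$-sparseness (via $E_Q=Q\setminus\bigcup_{Q'\in\mathcal G(Q)}Q'$) is equivalent to the paper's appeal to the Carleson-to-sparse lemma of Lerner--Nazarov. The technical point you flag about applying Proposition~\ref{p-sparse domi 2} to $\vec f\chi_{3Q}$ rather than a function supported in $Q$ is real and is exactly what the paper glosses over; as you observe, it is resolved by noting that the proof of Theorem~\ref{p-sparse domi} actually controls $\mathcal V_\rho(\mathcal T_\ast(f\chi_{3Q_0}))$.

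There is, however, a genuine gap: your construction only yields \eqref{T domi} for a.e.\ $x\in Q^{(0)}$ (or in your finite union of dyadic cubes), not for a.e.\ $x\in\mathbb R^d$. The operator $\mathcal V_\rho({\mathcal T_n}_{\,,\,\ast}\vec f\,)$ is \emph{not} supported in $Q^{(0)}$ even though $\vec f$ is, so the inclusion must be established at every point. Your closing remark about partitioning $\operatorname{supp}\vec f$ among finitely many dyadic cubes does not address this: it still only covers the region where $\vec f$ lives, not the complement where the variation can be nonzero. The paper handles the exterior by adjoining the chain $\{3^\ell Q_0:\ell\ge2\}$ to the sparse family and observing that for $x\in 3^{\ell+1}Q_0\setminus 3^\ell Q_0$ one has $\mathcal V_\rho({\mathcal T_n}_{\,,\,\ast}\vec f\,)(x)\in C\llangle\vec f\rrangle_{3^{\ell+1}Q_0}$ (which follows from Proposition~\ref{p-sparse domi 2} applied on the larger cube, or more directly from the kernel size bound since $x$ is separated from $\operatorname{supp}\vec f$). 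This nested chain is disjointly nested, so adding it preserves sparseness with the same $\eta=2^{-1}3^{-d}$. Your argument needs this final piece to be complete.
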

\begin{proof}

Take $\vec f\in L^1(\mathbb R^d,\mathbb R^n)$ and a cube $Q_0$ with $\supp (\vec f)\subset Q_0$. Applying Proposition \ref{p-sparse domi 2} with $\delta:={1\over2}$ we obtain a family
$\mathcal G_1$ of dyadic subcubes of $Q_0$ such that
\eqref{sparse family} and \eqref{VT first generation} hold.

Next, we apply  Proposition \ref{p-sparse domi 2} with $\delta:={1\over2}$ to each cube $Q$ in $\mathcal G_1$ (with function $\vec f\chi_{3Q}$) to get a family $\mathcal G_2$ and so on. Let $\mathcal G_0:=\{Q_0\}$ and $\mathcal G:= \cup_{\ell\geq0}\mathcal G_\ell$. Then we see that for any $N\in \nn$ and a.\, e. $x\in Q_0$,
\begin{align}\label{e-convex body repres N step}
\mathcal{V}_{\rho}\left({\mathcal T_n}_{\,,\,\ast}\vec f\ \right)(x) \in C\sum_{\ell=0}^N\sum_{Q\in \mathcal G_\ell}\chi_{Q}(x)\left\llangle \vec f\right\rrangle_{3Q}+
\sum_{Q\in \mathcal G_{N+1}}\chi_Q(x)\mathcal{V}_{\rho}\left({\mathcal T_n}_{\,,\,\ast}\left(\vec f \chi_{3Q}\right)\right)(x).
\end{align}
Observe that for any $\ell\ge 0$ and $Q\in \mathcal G_\ell$,
$$\sum_{P\in \mathcal G,\, P\subset Q}|P|=\sum_{k=1}^\fz \sum_{P\in \mathcal G_{\ell+k},\,P\subset Q}|P|+|Q|\le \sum_{k=1}^\fz\frac1{2^k} |Q|+|Q|=2|Q|.$$
Thus, by \cite[Lemma 6.3]{LernerNazarov15}, the family $\mathcal G$ is a dyadic ${1\over2}$-sparse family. Moreover, for any $\ell\ge 1$,
by the construction of $\mathcal G_\ell$,
$$\mathcal A_{\ell+1}:=\bigcup_{Q\in \mathcal G_{\ell+1}}Q\subset\mathcal A_{\ell}:=\bigcup_{Q\in \mathcal G_\ell}Q\subset Q_0,\,{\rm and \,}\sum_{P\in \mathcal G_{\ell+1},\,P\subset Q}|P|\le \frac12 |Q|\,\,{\rm for\, any\, }Q\in \mathcal G_{\ell}.$$
 Thus, since for each $\ell$, cubes in $\mathcal G_\ell$ are disjoint, we have
 $$\lim_{\ell\to\fz}|A_{\ell}|=\lim_{\ell\to\infty} \sum_{Q\in\mathcal G_\ell} |Q|\le
 \lim_{\ell\to\fz}\frac12\sum_{Q\in \mathcal G_{\ell-1}}|Q|\le\lim_{\ell\to\fz}\frac1{2^\ell} |Q_0|=0.$$
 This implies that for any $k\in\{1,2,\,\ldots, n\}$ and $\alpha>0$, the set
 $$E^k_\alpha:=\left\{x\in\rd: \lim_{N\to\fz}\sum_{Q\in\mathcal G_N}
 \chi_{Q}(x)\mathcal{V}_{\rho}(\mathcal T_{\ast}(f_k\chi_{3Q}))(x)>\alpha\right\}$$
 satisfies that $|E^k_\alpha|=0$. 

  By letting $N\to\fz$ in \eqref{e-convex body repres N step}, we conclude that for a.\,e. $x$ on $Q_0$,
\begin{align*}
   \mathcal{V}_{\rho}\left({\mathcal T_n}_{\,,\,\ast}\vec f\ \right)(x) \in C \sum_{Q\in\mathcal G} \left\llangle\vec f\right\rrangle_{3Q} \chi_Q(x).
\end{align*}

To dominate $\mathcal{V}_{\rho}({\mathcal T_n}_{\,,\,\ast}\vec f\ )$ outside of $Q_0$, for $\ell\geq0$, we apply Proposition \ref{p-sparse domi 2} to
$\vec f$ and $3^{\ell}Q_0$ instead of $Q_0$ therein, and see that for $x\in 3^{\ell+1}Q_0\backslash 3^\ell Q_0$,
$$ \mathcal{V}_{\rho}\left({\mathcal T_n}_{\,,\,\ast}\vec f\ \right)(x)\in C \left\llangle\vec f\right\rrangle_{3^{\ell+1}Q_0}.$$
So for a.\,e. $x\in \mathbb R^d$, we have
$$ \mathcal{V}_{\rho}\left({\mathcal T_n}_{\,,\,\ast}\vec f\ \right)(x) \in  C \sum_{Q\in\mathcal G} \left\llangle\vec f\right\rrangle_{3Q} \chi_Q(x) + C \sum_{\ell\geq1}  \left\llangle\vec f\right\rrangle_{3^{\ell}Q_0} \chi_{3^\ell Q_0}(x).  $$

Note that the above inclusion holds if we replace $\chi_Q$ by $\chi_{3Q}$. Next, since
the collection $\mathcal G$ is a dyadic ${1\over2}$-sparse family, the collection
$\{3Q:\ Q\in\mathcal G \}$ is an $\eta$-sparse family with $\eta:= {1\over 2\cdot 3^d }$.
If we add this collection to cubes $3^\ell Q_0$, $\ell\geq2$, it will remain $\eta$-sparse with the same $\eta$
as above.

So the collection
$$\mathcal F:=\{3Q:\ Q\in\mathcal G\}\bigcup\left\{ 3^\ell Q_0:\ \ell\geq2\right\}$$
is $\eta$-sparse, and hence
\eqref{T domi} holds.

The proof of
Proposition \ref{p-sparse domi 3} is complete.
\end{proof}

Based on Proposition \ref{p-sparse domi 3}, Theorem \ref{t-lp bdd vari matrix weigh} follows from combining the
proof of Theorem 1.14
in \cite{Cruz-UribeIM18IEOT} with $T\vec f$ replaced by $\mathcal{V}_{\rho}({\mathcal T_n}_{\,,\,\ast}\vec f\ )$, and then applying
Corollary 1.16 in \cite{Cruz-UribeIM18IEOT} in the end.
For the reader's convenience, we present the proof.

We begin with recalling some known results on the matrix weights in  \cite{Goldberg03PacificJM}. It is well known that for a matrix weight $W$, a cube $Q$ and any $1<p<\infty$, there exist positive definite matrices $W_Q$ and $W'_Q$, called reducing operators of $W^{\frac1p}$ and $W^{-\frac1p},$ respectively, such that
  \begin{align*}
  |Q|^{-{1\over p}}\left\|\chi_Q W^{1\over p} \vec{e}\right\|_{{L^p(\mathbb{R}^d, \mathbb{R}^n)}}\approx \left|W_Q\vec{e}\right|\quad{\rm and}\quad
          |Q|^{-{1\over p'}}\left\|\chi_Q W^{-{1\over p}} \vec{e}\right\|_{{L^{p'}(\mathbb{R}^d, \mathbb{R}^n)}}\approx \left|W'_Q\vec{e}\right|
          \end{align*}
for every $\vec{e}\in \mathbb{R}^n$. 
Note that $\|W_Q W'_Q\|\geq 1$ for any cube $Q$,
and that $W$ is a matrix $A_p$ weight if and only if
$ \|W_Q W'_Q\|\leq C<\infty $ for all cubes $Q$. We also mention
that if $W$ is in matrix $A_p$, then for every $\vec e\in \mathbb R^n$, $|W^\frac1p\vec e|^p$ is a scalar $A_p$ weight, and
\begin{align}\label{e-Ap weight matrix and scalar}
\left[\left|W^\frac1p\vec e\right|^p\right]_{A_p}\le[W]_{\mathcal A_p},
\end{align}
where $[W]_{\mathcal A_p}$ is as in \eqref{e-matrix ap defn}.

 For a scalar weight $w$, we say $w\in A_\fz$ if it satisfies the Fujii--Wilson condition
$$[w]_{A_\fz}:=\sup_Q\frac1{w(Q)}\int_Q\mathcal M(w\chi_Q)(x)\,dx<\fz,$$
where $\mathcal M$ is the Hardy--Littlewood maximal function as in \eqref{e-Hardy L maxi func defn}. Then for any $W\in \mathcal A_p$, using \eqref{e-Ap weight matrix and scalar} and the Fujii--Wilson condition, we
define
$$[W]_{\mathcal A^{sc}_{p,\,\fz}}:=\sup_{\vec e\in \mathbb R^n}\left[\left|W^\frac1p\vec e\right|^p\right]_{A_\fz},$$
$$\mathcal M_{r,\,p'}\vec f(x):=\sup_Q\left[\frac 1{|Q|}\int_Q\left|\vec f(y)\right|^{p'r}\,dy\right]^{\frac1{p'r}}\chi_Q(x),$$
and
$$\mathcal M_{s,\,p}\vec f(x):=\sup_Q\left[\frac1{|Q|}\int_Q|\vec f(y)|^{ps}\,dy\right]^{\frac1{ps}}\chi_Q(x),$$
where
\begin{align}\label{e-index r s defn}
r:=1+\frac1{2^{d+11}[W^{-\frac{p'}p}]_{\mathcal A^{sc}_{p',\,\fz}}},\,\,
s:=1+\frac1{2^{d+11}[W]_{\mathcal A^{sc}_{p,\,\fz}}}.
\end{align}
Then from \cite[Corollary 2.2]{Goldberg03PacificJM} (see also \cite{Cruz-UribeIM18IEOT}),
$$[W]_{\mathcal A^{sc}_{p,\,\fz}}\le [W]_{\mathcal A_p}.$$
By this and \cite[(5.3) and Lemma 2.4]{Cruz-UribeIM18IEOT}, we see that $W^{-\frac{p'}p}\in \mathcal A_{p'}$ and
\begin{align}\label{e-bdd r maxi fun}
\|\mathcal M_{r,\,p'}\|_{L^{p}(\rd,\,\mathbb R^n)\to L^{p}(\rd,\,\mathbb R^n)}&\ls (r')^\frac1p\approx \left[W^{-\frac{p'}p}\right]^{\frac1p}_{\mathcal A^{sc}_{p',\,\fz}}
\ls [W]_{\mathcal A_p}^{\frac1{p(p-1)}}.
\end{align}
Also, we have
\begin{align}\label{e-bdd s maxi fun}
\|\mathcal M_{s,\,p}\|_{L^{p'}(\rd,\,\mathbb R^n)\to L^{p'}(\rd,\,\mathbb R^n)}\ls (s')^\frac1{p'}\approx \left[W\right]^{\frac1{p'}}_{\mathcal A^{sc}_{p,\,\fz}}\ls \left[W\right]^{\frac1{p'}}_{\mathcal A_p}.
\end{align}

As in the proof of \cite[Theorem 1.10]{Cruz-UribeIM18IEOT}, to show Theorem \ref{t-lp bdd vari matrix weigh}, it suffices to show that for any $\vec f\in L^p(\rd, \mathbb R^n)$,
\begin{align}\label{e-lp bdd varia unweighted}
\left\|W^{\frac1p}\mathcal{V}_{\rho}\left({\mathcal T_n}_{\,,\,\ast}\left(W^{-\frac1p}\vec f\right)\right)\right\|_{L^p(\rd,\,\mathbb R^n)}\ls [W]_{A_p}^{1+{1\over p-1} -{1\over p}}
\left\|\vec f\right\|_{L^p(\rd,\,\mathbb R^n)}.
\end{align}

Moreover, to show \eqref{e-lp bdd varia unweighted}, by Proposition \ref{p-sparse domi 3},
it remains to show that for any sparse family $\mathcal S$ and the operator $T^{\mathcal S}\vec f$ of the form
$$T^{\mathcal S}\vec f(x):=\sum_{Q\in \mathcal S}\frac 1{|Q|}
\int_Q \varphi_Q(x,y)\vec f(y)\,dy\chi_Q(x),$$
where for each cube $Q$, $\varphi_Q$ is a real valued function such that $\|\varphi_Q(x, \cdot)\|_{L^\fz(\rd)}\le 1$,
the estimate
\begin{align}\label{e-lp bdd spar ope}
\left\|W^{\frac1p}\left(T^{\mathcal S}\left(W^{-\frac1p}\vec f\right)\right)\right\|_{L^p(\rd)}\ls [W]_{\mathcal A_p}^{1+{1\over p-1} -{1\over p}}
\left\|\vec f\right\|_{L^p(\rd)}
\end{align}
holds, where the implicit constant is independent of $\mathcal S$ and $\vec f$.

To show \eqref{e-lp bdd spar ope}, by a standard approximation argument, it
suffices to show that for any $\vec f, \vec g\in L^\fz_c(\rd,\mathbb R^n)$,
\begin{align}\label{e-lp bdd dual}
{\rm I}:=\left|\left\langle W^{\frac1p} T^{\mathcal S}W^{-\frac1p}\vec f, \vec g\right\rangle_{L^2(\rd)}\right|\ls [W]_{\mathcal A_p}^{1+{1\over p-1} -{1\over p}}
\left\|\vec f\right\|_{L^p(\rd,\,\mathbb R^n)}\left\|\vec g\right\|_{L^{p'}(\rd,\,\mathbb R^n)}.
\end{align}
From the definition of $T^\mathcal S$, we deduce that
\begin{align*}
{\rm I}\le \sum_{Q\in \mathcal S}\int_\rd \chi_Q(x)
\left|\left\langle W^{\frac1p}(x)\frac 1{|Q|}
\int_Q \varphi_Q(x,y)W^{-\frac1p}(y)\vec f(y)\,dy, \vec g(x)\right\rangle_{\mathbb R^n}\right|\,dx.
\end{align*}
For each cube $Q$, let  $W^{p'}_Q$ and $W^p_Q$ be the reducing operators such that
for any vector $\vec e\in \mathbb R^n$,
\begin{align*}
\left|W^{p}_Q \vec e\right|\approx \left[\frac1{|Q|}\int_Q\left|
W^{-\frac1p}(x)\vec e\right|^{(p'r)'}\,dx\right]^{\frac1{(p'r)'}},\,\,
\left|W^{p'}_Q \vec e\right|\approx \left[\frac1{|Q|}\int_Q\left|
W^{\frac1p}(x)\vec e\right|^{(ps)'}\,dx\right]^{\frac1{(ps)'}},
\end{align*}
where $r$ and $s$ are as in \eqref{e-index r s defn}.
Then (2.2) in \cite{Cruz-UribeIM18IEOT} shows that for any cube $Q$,
\begin{align}\label{e-reducing ope equiv norm-1}
\left[\frac 1{|Q|}\int_Q\left\|\left(W^p_Q\right)^{-1}
W^{-\frac1p}(y)\right\|^{(p'r)'}\,dy\right]^{\frac1{(p'r)'}}
&=\left[\frac 1{|Q|}\int_Q\left\|W^{-\frac1p}(y)\left(W^p_Q\right)^{-1}
\right\|^{(p'r)'}\,dy\right]^{\frac1{(p'r)'}}\\
&\ls\left\|W^p_Q\left(W^p_Q\right)^{-1}\right\|=1,\nonumber
\end{align}
and similarly,
\begin{align}\label{e-reducing ope equiv norm-2}
\left[\frac1{|Q|}\int_Q\left\|\left(W^{p'}_Q\right)^{-1}W^{\frac1p}(x)\right\|^{(ps)'}
\,dx\right]^{\frac1{(ps)'}}
&\ls\left\|W^{p'}_Q\left(W^{p'}_Q\right)^{-1}\right\|=1.
\end{align}
For each $x\in Q$, we have that by the H\"older inequality and \eqref{e-reducing ope equiv norm-1},
\begin{align*}
&\left|\left\langle W^{\frac1p}(x)\frac 1{|Q|}
\int_Q \varphi_Q(x,y)W^{-\frac1p}(y)\vec f(y)\,dy, \vec g(x)\right\rangle_{\mathbb R^n}\right|\\
&\quad=\left|\left\langle W^{p'}_Q W^p_Q\frac 1{|Q|}
\int_Q \varphi_Q(x,y)\left(W^p_Q\right)^{-1}W^{-\frac1p}(y)\vec f(y)dy, \left(W^{p'}_Q\right)^{-1}W^{\frac1p}(x)\vec g(x)\right\rangle_{\mathbb R^n}\right|\\
&\quad\le\sup_Q\left\|W^{p'}_Q W^p_Q\right\|
\frac 1{|Q|}
\int_Q |\varphi_Q(x,y)|\left|\left(W^p_Q\right)^{-1}W^{-\frac1p}(y)\vec f(y)\right|\,dy\left|\left(W^{p'}_Q\right)^{-1}W^{\frac1p}(x)\vec g(x)\right|\\
&\quad\ls [W]_{\mathcal A_p}^\frac1p\frac 1{|Q|}
\int_Q\left\|\left(W^p_Q\right)^{-1}W^{-\frac1p}(y)\right\|\left|\vec f(y)\right|\,dy \left\|\left(W^{p'}_Q\right)^{-1}W^{\frac1p}(x)\right\|
\left|\vec g(x)\right|\\
&\quad\le [W]_{\mathcal A_p}^\frac1p\frac 1{|Q|}
\left[\int_Q\left\|\left(W^p_Q\right)^{-1}W^{-\frac1p}(y)\right\|^{(p'r)'}\,dy\right]^{\frac1{(p'r)'}} \left[\int_Q\left|\vec f(y)\right|^{p'r}\,dy\right]^{\frac1{p'r}}\\
&\quad\quad\times\left\|\left(W^{p'}_Q\right)^{-1}W^{\frac1p}(x)\right\|
\left|\vec g(x)\right|\\
&\quad\ls [W]_{\mathcal A_p}^\frac1p\left[\frac 1{|Q|}\int_Q\left|\vec f(y)\right|^{p'r}\,dy\right]^{\frac1{p'r}}\left\|\left(W^{p'}_Q\right)^{-1}W^{\frac1p}
(x)\right\|\left|\vec g(x)\right|,
\end{align*}
where in the third inequality, we use the following inequality obtained in \cite[(5.5)]{Cruz-UribeIM18IEOT}:
$$\sup_Q\left\|W^{p'}_Q W^p_Q\right\|\ls [W]_{\mathcal A_p}^\frac1p.$$
This together with \eqref{e-reducing ope equiv norm-2} further implies that
\begin{align*}
{\rm I}&\ls [W]_{\mathcal A_p}^{\frac1p}\sum_{Q\in \mathcal S}\left[\frac 1{|Q|}\int_Q\left|\vec f(y)\right|^{p'r}\,dy\right]^{\frac1{p'r}}
\int_Q\left\|\left(W^{p'}_Q\right)^{-1}W^{\frac1p}(x)\right\|
\left|\vec g(x)\right|\,dx\\
&\le [W]_{\mathcal A_p}^{\frac1p}\sum_{Q\in \mathcal S}\left[\frac 1{|Q|}\int_Q\left|\vec f(y)\right|^{p'r}\,dy\right]^{\frac1{p'r}} \left[\int_Q\left\|\left(W^{p'}_Q\right)^{-1}W^{\frac1p}(x)\right\|^{(ps)'}
\,dx\right]^\frac1{(sp)'}\left[\int_Q\left|\vec g(x)\right|^{ps}\,dx\right]^{\frac1{ps}}\\
&\ls [W]_{\mathcal A_p}^{\frac1p}\sum_{Q\in \mathcal S}|Q|\left[\frac 1{|Q|}\int_Q\left|\vec f(y)\right|^{p'r}\,dy\right]^{\frac1{p'r}} \left[\frac1{|Q|}\int_Q\left|\vec g(x)\right|^{ps}\,dx\right]^{\frac1{ps}}.
\end{align*}

For any cube $Q\in \mathcal S$, let $E_Q$ be the subset of $Q$
as in Definition \ref{d-sparse}.
Now by the H\"older inequality, \eqref{e-bdd r maxi fun} and \eqref{e-bdd s maxi fun}, we have
\begin{align*}
{\rm I}&\ls [W]_{\mathcal A_p}^{\frac1p}\sum_{Q\in \mathcal S}
|E_Q|\inf_{x\in Q}\mathcal M_{r,\,p'}\vec f(x)\inf_{x\in Q}\mathcal M_{s,\,p}\vec g(x)\\
&\ls [W]_{\mathcal A_p}^{\frac1p}\sum_{Q\in \mathcal S}
\int_{E_Q}\mathcal M_{r,\,p'}\vec f(x)\mathcal M_{s,\,p}\vec g(x)\,dx\\
&\ls[W]_{\mathcal A_p}^{\frac1p}\int_{\rd}\mathcal M_{r,\,p'}\vec f(x)\mathcal M_{s,\,p}\vec g(x)\,dx\\
&\ls[W]_{\mathcal A_p}^{\frac1p}\left[\int_{\rd}\left[\mathcal M_{r,\,p'}\vec f(x)\right]^p\,dx\right]^{\frac1p}\left[\int_{\rd}\left[\mathcal M_{s,\,p}\vec g(x)\right]^{p'}\,dx\right]^{\frac1{p'}}\\
&\ls [W]_{\mathcal A_p}^{1+\frac1{p-1}-\frac1p}\left[\int_{\rd}\left|\vec f(x)\right|^p\,dx\right]^{\frac1p}\left[\int_{\rd}|\vec g(x)|^{p'}\,dx\right]^{\frac1{p'}}.
\end{align*}
Thus, \eqref{e-lp bdd dual} holds and then the proof of Theorem \ref{t-lp bdd vari matrix weigh} is complete.

\bigskip
\bigskip

{\bf Acknowledgments:}
X. T. Duong and J. Li are supported by ARC DP 160100153 and Macquarie University Research Seeding Grant. D. Yang is supported by the NNSF of China (Grant No. 11871254) and
the NSF of Fujian Province of China (Grant No. 2017J01011).

\bigskip


\medskip

\smallskip

Xuan Thinh Duong, Department of Mathematics, Macquarie University, NSW, 2109, Australia.

\smallskip

{\it E-mail}: \texttt{xuan.duong@mq.edu.au}

\vspace{0.3cm}

Ji Li, Department of Mathematics, Macquarie University, NSW, 2109, Australia.

\smallskip

{\it E-mail}: \texttt{ji.li@mq.edu.au}

\vspace{0.3cm}

Dongyong Yang (Corresponding author),
 School of Mathematical Sciences,
         Xiamen University,
         Xiamen, 361005, P. R. China

{\it E-mail}: \texttt{dyyang@xmu.edu.cn}

\end{document}